%
%
%
%
%
\RequirePackage{fix-cm}
\documentclass[twocolumn]{svjour3_arxiv}          
\smartqed  
\usepackage[pdftex]{graphicx}

\usepackage[l2tabu, orthodox]{nag}
\usepackage{color}
\usepackage[usenames,dvipsnames]{xcolor}
\definecolor{someblue}{rgb}{0,0,.8}
\definecolor{darkgreen}{rgb}{0,.5,0}
\definecolor{DarkBlue}{rgb}{0,0,0.4}
\usepackage{amsmath, amssymb,amstext,latexsym,bbm,bbold}
\usepackage{csquotes}
\usepackage{tikz}
\usetikzlibrary{calc}
\usepackage{pgfplots}
\pgfplotsset{compat=newest}
\usepgfplotslibrary{patchplots}

\usepackage[subrefformat=parens,labelformat=parens]{subfig}
\usepackage{multirow}
\usepackage{rotating}

\usepackage{changepage}

\usepackage{stmaryrd}
\SetSymbolFont{stmry}{bold}{U}{stmry}{m}{n}

\newcommand{\average}[1]{{\left\langle #1 \right\rangle}}
\newcommand{\jump}[1]{{\left\llbracket #1 \right\rrbracket}}
\newcommand{\upwind}[1]{{\left\{ #1 \right\}}^{\mathrm{up}}}

\newcommand{\IN}{\mathbb{N}}

\newcommand{\IR}{\mathbb{R}}


\renewcommand{\vec}[1]{\mathbf{#1}}



\newcommand{\abs}[1]{\left\lvert{#1}\right\rvert}

\newcommand{\spf}[2]{{\left\langle{#1},{#2}\right\rangle}}

\usepackage{bbm}

\renewcommand{\vec}[1]{\boldsymbol{#1}}

\renewcommand{\tens}[1]{\boldsymbol{\mathsf{#1}}}

\newcommand{\einschraenkung}{\,\rule[-5pt]{0.4pt}{12pt}\,{}}

\DeclareGraphicsRule{*}{mps}{*}{}

%

%
%
%
%
%
\begin{document}

\title{Generating admissible space-time meshes for moving domains in $d+1$-dimensions}


\author{Elias Karabelas         \and
        Martin Neum\"uller 
}


\institute{E. Karabelas \at
              Medical University of Graz \\
              Institute of Biophysics \\
              Harrachgasse 21/IV\\
              A-8010 Graz, Austria\\ 
              Tel.: +43-316-380 7759\\
              Fax: +43-316-380 9660\\
              \email{elias.karabelas@medunigraz.at}           
           \and
           M. Neum\"uller \at
           	  Johannes Kepler University\\	
              Institute of Computational Mathematics\\
              Altenberger Strasse 69\\
              A-4040 Linz, Austria
}

\date{}

\maketitle

\begin{abstract}
In this paper we present a di\-sco\-ntinu\-ous Ga\-le\-rk\-in finite element method for the solution of the transient Stokes equations on moving domains. For the discretization we use an interior penalty Galerkin approach in space, and an upwind technique in time. The method is based on a decomposition of the space-time cylinder into finite elements. Our focus lies on three-dimensional moving geometries, thus we need to triangulate four dimensional objects. For this we will present an algorithm to generate $d+1$-dimensional simplex space-time meshes and we show under natural assumptions that the resulting space-time meshes are admissible. Further we will show how one can generate a four-dimensional object resolving the domain movement. First numerical results for the transient Stokes equations on triangulations generated with the newly developed meshing algorithm are presented.
\keywords{finite elements \and moving domains \and four-dimensional mesh generation \and parabolic PDE \and space-time \and discontinuous Galerkin}
\end{abstract}

\section{Introduction}
\label{intro}
The finite element approximation of transient partial differential equations is in most cases based on explicit or implicit time discretization schemes. In particular the simultaneous consideration of different time steps requires an appropriate interpolation to couple the solutions at different time levels. Especially for spatial domains with a moving boundary one encounters various numerical difficulties. One usually relies on an arbitrary Lagrangian-Eulerian formulation. See for example the recent article \cite{Gawlik2014} and references therein for an overview of the ongoing discussion. In this paper we consider the application of finite elements in the whole space-time cylinder $Q$. By this we mean a decomposition of $Q$ into simplical elements. Therefore one replaces the problem of time discretization by a meshing problem. Having this, one can resolve the possible movement of the domain $\Omega$ directly. Simplicial space-time meshes have advantages over tensor-product meshes, since it is easier to decompose complex space-time meshes by those elements.

Space-time finite element methods have been applied to several parabolic model problems. Least square methods for convection-diffusion problems are considered, e.g., in \cite{Bank2013,Behr2008} and for flow problems, e.g., in \cite{Masud1997,Tezduyar1992:I,Tezduyar1992:II,Tezduyar2004,Tezduyar2006,Rendall2012}. Discontinuous Galerkin finite element methods have been applied to solve transient convection-diffusion problems in \cite{Sudirham2006}, for fluid dynamics see \cite{Van2008} and problems from solid mechanics see \cite{Miller2009,Abedi2006,Dumont2012,Abedi2010}. Very recently a paper concerning the generation of 4D simplicial meshes from a sequence of 3D MRI data has been considered in \cite{Foteinos2014}. Also rather recently, the X-FEM method has been considered in the space-time setting see \cite{Lehrenfeld2015}. In most cases, the time dependent equation is discretized in the space-time domain on \emph{space-time slabs}. This allows for local mesh refinement in the space-time domain, see for example \cite{Tezduyar2004}. 

In this paper we consider, similar to \cite{Behr2008}, a decomposition of the space-time cylinder into simplicial finite elements. In particular for spatial domains $\Omega \subsetneq \mathbb R^3$ the space-time cylinder is a four-dimensional object, which has to be decomposed into finite elements. 

In \cite{Behr2008}, a method based on Delauny's algorithm is given to construct a four-dimensional triangulation out of a given decomposition of the spatial domain $\Omega$. This method relies on the extension of the given finite elements of the triangulation of $\Omega$ to four-dimensional prisms. Afterwards a random perturbation of the resulting points is made, to ensure the admissibility of the resulting four dimensional mesh. Here, we present a different approach using similar ideas. Our method does not rely on random perturbations. Furthermore we can ensure and proof that the resulting mesh is admissible and we can also include movements of the domain boundary. We want to stress out, that our approach is still limited to a special class of boundary movements which we will describe in Section \ref{sec:mov_dom}.

We will consider Stokes flow as a motivating model problem. For the approximation of the transient Stokes equations in the space-time cylinder we consider a discontinuous Galerkin finite element method. In particular, we apply an interior penalty approach in space \cite{Arnold2002,Baumann1999,Riviere2008,D.A.D.Pietro2012}, and an upwind technique in time \cite{Thomee2006,NeumuellerThesis2013}. 

This paper is organized as follows. In Section \ref{sec:DG} we describe the discontinuous Galerkin finite element method to solve the transient Stokes equations as a model problem. The core part of this paper and the main results are given in Section \ref{sec:meshing4d} where we describe our algorithm to generate a four-dimensional triangulation out of a given three-dimensional one. In Section \ref{sec:results} we present some numerical results which underline the applicability of the proposed approach. We close the paper with some conclusions and comments on further work.
\section{Space-time discontinuous Galerkin Method}\label{sec:DG}
For any $t \in (0,T)$ let $\Omega(t) \subsetneq \mathbb R^d$ with $d=1,2,3$ be a bounded Lipschitz domain with boundary $\Gamma(t) := \partial \Omega(t)$. We assume that the boundary $\Gamma(t)$ admits the following decomposition for every $t \in (0,T)$
\begin{equation}\label{eq:bd_def}
\Gamma(t) = \Gamma_D(t) \cup \Gamma_R(t).
\end{equation}
We assume that the movement of the domain $\Omega(t)$ is known for every $t \in [0,T]$. We define the space-time cylinder $Q$ as
\begin{equation}
\nonumber Q := \left\{ (\vec x, t) \in \mathbb R^{d+1} : \vec x \in \Omega(t) ~ t \in (0,T)\right\}.
\end{equation}
Further we define the space-time mantle $\Sigma$ as
\begin{equation*}
\Sigma := \left\{ (\vec x, t) \in \mathbb R^{d+1} : \vec x \in \Gamma(t) ~ t \in (0,T)\right\}.
\end{equation*}
The decomposition \eqref{eq:bd_def} induces
\begin{equation*}
\Sigma = \Sigma_D \cup \Sigma_R.
\end{equation*}
The model problem we intend to study is governed by the transient Stokes equations. It reads as find $(\vec u,p)$ such that
\begin{equation}
\begin{aligned}
\frac{\partial}{\partial t} \vec u -\nu \Delta \vec u + \nabla p &= \vec f && \text{in } Q,\\
\mathrm{div}(\vec u) &= 0 && \text{in } Q,\\
\vec u &= \vec g_D && \text{on } \Sigma_D,\\
\label{eq:robin_BC}\nabla \vec u \cdot \vec n + \alpha_R \vec u - p \vec n &= \vec g_R && \text{on } \Sigma_R,\\
\vec u &= \vec u_0 && \text{on } \Sigma_0 := \Omega(0).
\end{aligned}
\end{equation}
\begin{remark}
In the case of a non-moving domain the definition of $Q$ and $\Sigma$ simplifies to
\begin{align*}
Q &:= \Omega \times (0,T),\\
\Sigma &:= \partial \Omega \times (0,T).
\end{align*}
\end{remark}
For deriving a discrete variational formulation we need to decompose the space-time cylinder $Q$ into simplicial elements, see \cite{Neumueller2013}. Let $\mathcal T_h$ be a sequence of decompositions
\begin{align*}
	\overline{Q} = \overline{\mathcal T}_h = \bigcup_{k=1}^N \overline{\tau}_k
\end{align*} 
into finite elements of mesh size $h_k$. For $d=1$ we have triangles, for $d=2$ we use tetrahedrons and for $d=3$ pentatopes are chosen. The generation of such triangulations from a given triangulation of $\Omega(0)$ is not trivial. We will address this topic in Section \ref{sec:meshing4d}.
\begin{definition}[Admissible decomposition]
A decomposition $\mathcal T_h$ is called \emph{admissible} iff the non-empty intersection of the closure of two finite elements is either an edge (for $d=1,2,3$), a triangle (for $d=2,3$) or a tetrahedron (for $d=3$).
\end{definition}
It is worth noting that discontinuous Galerkin methods are not restricted to admissible decompositions. However one needs additional technical assumptions, see \cite{Ortner2009}.
\begin{definition}[Interior facet]
Let $\mathcal T_h$ be a decomposition of $Q$ into finite elements $\tau_k$. For two neighboring elements $\tau_k, \tau_l \in \mathcal T_h$ we call
\begin{align*}
	\Gamma_{kl} := \overline{\tau}_k \cap \overline{\tau}_l
\end{align*} 
an \emph{interior facet} iff $\Gamma_{kl}$ is a $d$-dimensional manifold. The set of all interior facets will be defined as $\mathcal I_h$.
\end{definition}
Any interior element $\Gamma_{kl}$ has an exterior normal vector $\vec n_{kl}$ with a non-unique direction. We fix the direction of the normal vector such that $\vec n_{kl}$ is the exterior normal vector of the
element $\tau_k$ when $k < l$. So the direction of the normal vector $\vec n_{kl}$ depends on
the ordering of the finite elements, but the variational formulation which we are going to
use will be independent of this ordering.
\begin{definition}
	Let $\Gamma_{kl} \in \mathcal I_h$ be an interior facet with outer normal $\vec n_k = (\vec n_{x,k}, n_{t,k})^\top \in \mathbb R^{d+1}$ for $\tau_k$ and $\vec n_l = -\vec n_k$ for $\tau_l$. For a given function $\phi$  smooth enough restricted to either $\tau_k$ or $\tau_l$ one defines :
	\begin{itemize}
		\item The \emph{jump} across $\Gamma_{kl}$ as
		\begin{align*}
		\jump{\phi}_{kl}:= \phi\einschraenkung_{\tau_k}\vec n_k + \phi\einschraenkung_{\tau_l} \vec n_l.
		\end{align*}
		\item The \emph{space jump} across $\Gamma_{kl}$ as
		\begin{align*}
		\jump{\phi}_{\vec x,kl}:= \phi\einschraenkung_{\tau_k}\vec n_{\vec x,k} + \phi\einschraenkung_{\tau_l} \vec n_{\vec x,l}.
		\end{align*}
		\item The \emph{time jump} across $\Gamma_{kl}$ as
		\begin{align*}
		\jump{\phi}_{t,kl}:= \phi\einschraenkung_{\tau_k} n_{t,k} + \phi\einschraenkung_{\tau_l} n_{t,l}.
		\end{align*}
		\item The \emph{average} of $\phi$ on $\Gamma_{kl}$ as
		\begin{align*}
		\average{\phi}_{kl} := \frac{1}{2}\left( \phi\einschraenkung_{\tau_k} + \phi\einschraenkung_{\tau_l}\right).
		\end{align*}
		\item The \emph{upwind} in time direction of $\phi$ is defined as
		\begin{align*}
		\upwind{\phi}_{kl}:= \begin{cases}
		\phi\einschraenkung_{\tau_k} & \text{if } n_{k,t} > 0 \\
		0			     & \text{if } n_{k,t} = 0 \\
		\phi\einschraenkung_{\tau_l} & \text{if } n_{k,t} < 0
		\end{cases}
		\end{align*}
	\end{itemize}
\end{definition}
Let $p,q \in \mathbb N_0$. Then one defines the spaces of piecewise polynomials
\begin{align*}
V_h^{p} &:= \left[S_h^p(\mathcal T_h)\right]^{d} =\left\{\vec v_h \in [\mathrm{L}^2(Q)]^d : \vec v_h \einschraenkung_{\tau_l} \in [\mathbb P_p(\tau_l)]^d~\right. \\ &\left. \text{for all }\tau_l \in \mathcal T_h,~\vec v_h \einschraenkung_{\Sigma_D} = \vec{0} \right\},\\
Q_h^q &:= \left\{ q_h \in \mathrm{L}^2(Q) : q_h\einschraenkung_{\tau_l} \in \mathbb P_q(\tau_l)~\text{for all }\tau_l \in \mathcal T_h \right\}.
\end{align*}
Inspired by works in \cite{NeumuellerThesis2013,D.A.D.Pietro2012} we will use the following  bilinear form defined for $\vec u_h , \vec v_h \in V_h^p(\mathcal T_h)$:
\begin{align*}
	A(\vec u_h, \vec v_h) := b_T(\vec u_h, \vec v_h)+a_h(\vec u_h, \vec v_h).
\end{align*}
The individual components read as
\begin{align*}
a_h(\vec u_h, \vec v_h) &:= \nu\sum_{l=1}^{N}\int_{\tau_l} \nabla_{\vec x} \vec u_h : \nabla_{\vec x} \vec v_h~dq\\
&-\nu\sum_{\Gamma_{kl} \in \mathcal I_h}\int_{\Gamma_{kl}}\average{\nabla_{\vec x} \vec u_h}_{\Gamma_{kl}}\jump{\vec v_h}_{\Gamma_{kl},\vec x}~ds_q\\&-\nu\sum_{\Gamma_{kl} \in \mathcal I_h}\int_{\Gamma_{kl}}\average{\nabla_{\vec x} \vec v_h}_{\Gamma_{kl}}\jump{\vec u_h}_{\Gamma_{kl},\vec x}~ds_q,\\
&+\sum_{\Gamma_{kl} \in \mathcal I_h} \frac{\sigma_u}{\overline{h}_{kl}} \int_{\Gamma_{kl}} \jump{\vec u_h}_{\Gamma_{kl},\vec x}\jump{\vec v_h}_{\Gamma_{kl},\vec x}~ds_q\\
&+\int_{\Sigma_R} \alpha_R(\vec x,t) \vec u_h \cdot \vec v_h ~ds_q,
\end{align*}
and
\begin{align*}
b_T(\vec u_h,\vec v_h)&:=\sum\limits_{l=1}^{N} -\int_{\tau_l}\vec u_h \cdot \frac{\partial}{\partial t} \vec v_h~dq + \int_{\Sigma_T}\vec u_h \cdot \vec v_h ds_q \\&+ \sum_{\Gamma_{kl}\in \mathcal I_h} \int_{\Gamma_{kl}} \upwind{\vec u_h} \jump{\vec v_h}_{\Gamma_{kl},t}~ds_q
\end{align*}
for a given velocity stabilization parameter $\sigma_u > 0$. Furthermore we define the following pressure bilinear forms for $\vec v_h \in V_h^p(\mathcal T_h)$ and  $(p_h , q_h) \in Q_h^q(\mathcal T_h) \times Q_h^q(\mathcal T_h)$ :
\begin{align*}
	b_p(\vec v_h,p_h)&:=\sum\limits_{l=1}^{N} \int_{\tau_l} p_h \mathrm{div}(\vec v_h)~dq \\&- \sum_{\Gamma_{kl}}\int_{\Gamma_{kl}}\average{p_h}_{\Gamma_{kl}} \jump{\vec v_h}_{\Gamma_{kl}, \vec x}~ds_q,\\
	d_p(p_h, q_h) &:= \sum_{\Gamma_{kl}\in\mathcal I_h}\sigma_p \overline{h}_{kl}\int_{\Gamma_{kl}}\jump{p_h}_{\Gamma_{kl},\vec x}\jump{q_h}_{\Gamma_{kl},\vec x}~ds_q
\end{align*}
for a given pressure stabilization parameter $\sigma_p$. In all the bilinear forms defined above we have used $\overline{h}_{kl} := \frac{1}{2}(h_k+h_l)$.
Hence we have to find $\vec u_h^0 \in V_h^{p}(\mathcal T_h)$ and $p_h \in Q_h^q(\mathcal T_h)$ such that
\begin{align}
\label{eq:stokes_discrete:1}A(\vec u_h^0,\vec v_h)-b_p(\vec v_h,p_h) &= \spf{\vec f}{\vec v_h}_Q +\spf{\vec u_0}{\vec v_h}\\&-A(\vec u_g^h,\vec v_h),\\
\label{eq:stokes_discrete:2}b_p(\vec u_h^0,q_h)+d_p(p_h,q_h) &= -b_p(\vec u_g^h,q_h).
\end{align}
Here we used an discrete extension $\vec u_g^h$ of the given Dirichlet data, for example a $L^2$-projection.

\section{Triangulations in $d+1$ dimensions}\label{sec:meshing4d}

In this section we will introduce an algorithm to decompose a hyperprism into simplices to generate a $d+1$ simplex space-time mesh. Moreover we will show that the resulting mesh is admissible if the nodes of the simplices from the initial mesh are sorted in a special way.

\subsection{Tensor product extensions}

A simple idea for constructing a space-time mesh for a given three-dimensional simplicial spatial mesh is to extrude the mesh in time direction by a tensor product extension, see also Figure \ref{fig:tensor_extension}. Afterwards we decompose the upcoming prisms or so called hyperprisms into simplicial elements. 

\begin{figure}[htbp]
	\centering
	   \includegraphics[width=0.45\linewidth]{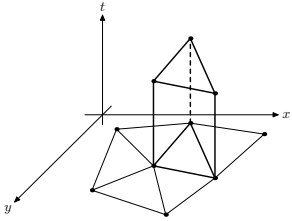}
	\caption{Tensor extension of a two-dimensional simplex}
	\label{fig:tensor_extension}
\end{figure}

Before we can start we need a precise definition of a $d$-dimensional simplex.
\begin{definition}[$d$-dimensional simplex]
  Let \begin{equation*}
  \{ \vec p_1,\ldots,\vec p_{d+1} \} \subset \IR^d,
  \end{equation*}
  $d \in \IN$ be a set of nodes, then a $d$-dimensional simplex $S_d$ is defined as
  \[ S_d := [ \vec p_1,\ldots,\vec p_{d+1} ] := \mathrm{conv}(\{ \vec p_1,\ldots,\vec p_{d+1} \}),\]
  where $\mathrm{conv}(\cdot)$ is the convex hull of a set of nodes. Note that we also fix the ordering of the nodes in the definition of a $d$-dimensional simplex.
\end{definition}

Now we can extrude one simplex in time direction and we obtain the following definition.

\begin{definition}[Hyperprism]\label{def_hyperprism}
For a given simplex $S_d = [ \vec p_1,\ldots,\vec p_{d+1} ]$ the tensor product extension in time direction for a given time interval $[0,\tau]$ or the so called hyperprism $H_{d+1}$ is given by
\begin{multline*}
P_{d+1} := [ \vec p_1,\ldots,\vec p_{d+1}; \tau ] \\:= \mathrm{conv}(\{ \vec p_1^\prime,\ldots,\vec p_{d+1}^\prime,\vec p_1^{\prime\prime},\ldots,\vec p_{d+1}^{\prime\prime}  \}) \subset \IR^{d+1},
\end{multline*}
with 
\begin{align*}
\vec p_i^\prime &:= (\vec p_i^\top,0)^\top, \\
\vec p_i^{\prime\prime} &:= (\vec p_i^\top,\tau)^\top,
\end{align*}
for $i = 1,\ldots,d+1$.
\end{definition}

\subsection{Decomposing Hyperprisms}

In this section we will give an algorithm to decompose the hyperprisms given in Definition \ref{def_hyperprism} into simplices. 
\begin{definition}[Decomposed hyperprism]
 Let $S_d$ be a given simplex and $P_{d+1}$ the hyperprism with respect to the simplex $S_d$ and $\tau > 0$. Then we define the following simplices
 \begin{equation}\label{equ_decomposed_hyperprism}
 \begin{aligned}
    S_{d+1}^1 &:= [ \vec p_1^\prime,\vec p_2^\prime,\vec p_3^\prime,\ldots,\vec p_{d+1}^\prime,\vec p_1^{\prime\prime} ],\\
    S_{d+1}^2 &:= [ \vec p_2^\prime, \vec p_3^\prime, \ldots,\vec p_{d+1}^\prime, \vec p_1^{\prime\prime}, \vec p_2^{\prime\prime} ],\\
    S_{d+1}^3 &:= [ \vec p_3^\prime, \ldots,\vec p_{d+1}^\prime, \vec p_1^{\prime\prime}, \vec p_2^{\prime\prime}, \vec p_3^{\prime\prime} ],\\
    &\;\;\;\vdots\\
    S_{d+1}^{d+1} &:= [ \vec p_{d+1}^\prime, \vec p_1^{\prime\prime}, \vec p_2^{\prime\prime}, \vec p_3^{\prime\prime}, \ldots, \vec p_{d+1}^{\prime\prime} ].
 \end{aligned}
\end{equation}
Furthermore we define the set of simplices $\mathcal{T}_P(S_d,\tau) := \{ S_{d+1}^1, \ldots, S_{d+1}^{d+1} \}$.
\end{definition}

Note, that the ordering of the nodes of a hyperpism $P_{d+1}$ is essential for the resulting decomposition (\ref{equ_decomposed_hyperprism}). In order to ensure that the simplices $S_{d+1}^1, \ldots, S_{d+1}^{d+1}$ defined in (\ref{equ_decomposed_hyperprism}) decompose the hyperpism $P_{d+1}$ we need the following lemma.
\begin{lemma}\label{lemma_decomposition_hyperprism}
  Let $P_{d+1}$ be some given hyperprism with respect to the simplex $S_d$ and $\tau > 0$. Then the set of simplices 
  \begin{equation*}
	\mathcal{T}_P(S_d,\tau) = \{ S_{d+1}^1, \ldots, S_{d+1}^{d+1} \}
  \end{equation*}
  defined in (\ref{equ_decomposed_hyperprism}) is an admissible decomposition of the hyperprism $P_{d+1}$.
\end{lemma}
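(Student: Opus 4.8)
The plan is to parametrize the hyperprism by barycentric coordinates and read off the decomposition, the non-overlapping of the pieces, and the conformity all at once from a single inequality. Since $\vec p_1,\ldots,\vec p_{d+1}$ are the vertices of the non-degenerate simplex $S_d$, every spatial point $\vec x \in S_d$ has unique barycentric coordinates $\lambda_1,\ldots,\lambda_{d+1} \ge 0$ with $\sum_{i=1}^{d+1}\lambda_i = 1$. Writing $s := t/\tau \in [0,1]$ for the normalized time, a generic point $(\vec x,t) \in P_{d+1}$ admits the representation $(\vec x,t) = \sum_{i=1}^{d+1}\lambda_i(1-s)\,\vec p_i' + \sum_{i=1}^{d+1}\lambda_i s\,\vec p_i''$, which exhibits it as a convex combination of the $2(d+1)$ vertices of $P_{d+1}$. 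I would introduce the partial sums $\Lambda_0 := 0$ and $\Lambda_j := \sum_{i=1}^{j}\lambda_i$ for $j=1,\ldots,d+1$, so that $0 = \Lambda_0 \le \Lambda_1 \le \cdots \le \Lambda_{d+1} = 1$.

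The key step is the characterization $(\vec x,t) \in S_{d+1}^j \iff \Lambda_{j-1} \le s \le \Lambda_j$. To prove it I would write $(\vec x,t)$ as a combination of the $d+2$ vertices $\vec p_j',\ldots,\vec p_{d+1}',\vec p_1'',\ldots,\vec p_j''$ of $S_{d+1}^j$ with coefficients $a_i$ ($i \ge j$) and $b_i$ ($i \le j$). Matching the spatial barycentric coordinates forces $b_i = \lambda_i$ for $i<j$, $a_i = \lambda_i$ for $i>j$, and $a_j+b_j = \lambda_j$; matching the time coordinate then forces $b_j = s-\Lambda_{j-1}$ and hence $a_j = \Lambda_j - s$. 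All coefficients are automatically nonnegative except $a_j$ and $b_j$, which are nonnegative precisely when $\Lambda_{j-1} \le s \le \Lambda_j$. Before this I would record that each $S_{d+1}^j$ is a genuine $(d+1)$-simplex: a vanishing affine combination of its vertices, split into time and space parts and using $\tau>0$ together with the affine independence of the $\vec p_i$, forces all coefficients to vanish, so the above coordinates are unique.

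From the characterization the three required properties follow directly. \emph{Covering}: since the intervals $[\Lambda_{j-1},\Lambda_j]$, $j=1,\ldots,d+1$, cover $[0,1]$ and $s\in[0,1]$, every point of $P_{d+1}$ lies in some $S_{d+1}^j$, hence $\bigcup_j S_{d+1}^j = P_{d+1}$. \emph{Disjoint interiors}: the interior of $S_{d+1}^j$ corresponds to the strict inequalities $\Lambda_{j-1} < s < \Lambda_j$, and these open time-slabs are pairwise disjoint. \emph{Admissibility}: for adjacent indices the intersection $S_{d+1}^j \cap S_{d+1}^{j+1}$ is cut out by $s = \Lambda_j$, which makes $a_j = 0$ in $S_{d+1}^j$ (dropping $\vec p_j'$) and $b_{j+1}=0$ in $S_{d+1}^{j+1}$ (dropping $\vec p_{j+1}''$); both leftover vertex sets coincide with $\{\vec p_{j+1}',\ldots,\vec p_{d+1}',\vec p_1'',\ldots,\vec p_j''\}$, so the intersection is a common $d$-dimensional facet of both. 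For non-adjacent indices $j' \ge j+2$ the condition $\Lambda_{j'-1} \le s \le \Lambda_j$ forces $s = \Lambda_j = \cdots = \Lambda_{j'-1}$, i.e. $\lambda_{j+1} = \cdots = \lambda_{j'-1} = 0$, confining the intersection to a common lower-dimensional sub-simplex. In every case the intersection is a common face, which is exactly the claimed admissibility.

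The part I expect to require the most care is the characterization together with the conformity bookkeeping: one must track which vertex is dropped on each side of a shared slab and verify that the two reduced vertex lists are literally the same ordered set, so that the shared facet is a true common face rather than merely two coincident point sets. The covering and disjointness statements are then immediate consequences of the monotonicity $0 = \Lambda_0 \le \cdots \le \Lambda_{d+1} = 1$.
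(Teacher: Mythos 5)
Your proof is correct, but it takes a genuinely different route from the paper's. The paper argues in three steps: it \emph{asserts} that the family $\{S_{d+1}^1,\ldots,S_{d+1}^{d+1}\}$ is admissible ``by construction'', notes that each simplex lies inside $P_{d+1}$ by convexity, and then establishes the covering property by a volume count on a reference configuration: an affine map takes $P_{d+1}$ to a reference hyperprism $\hat{P}_{d+1}$ of volume $\frac{1}{d!}$, each of the $d+1$ reference simplices has volume $\frac{1}{(d+1)!}$, and $(d+1)\cdot\frac{1}{(d+1)!}=\frac{1}{d!}$ forces the union to fill the prism. You instead prove the membership characterization $(\vec x,t)\in S_{d+1}^j \iff \Lambda_{j-1}\le s\le \Lambda_j$ in terms of partial sums of barycentric coordinates (the classical Freudenthal--Kuhn staircase argument) and read off covering, disjointness of interiors, and face-to-face matching from that single inequality. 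What each buys: the paper's volume argument is short, but it leaves the conformity claim --- exactly the part your bookkeeping nails down, including which vertex is dropped on each side of a shared slab --- as an unproved assertion, and its volume count tacitly relies on the interiors not overlapping; your approach is longer but self-contained, and it yields the precise identity of every pairwise intersection rather than only its measure. Two minor points: your statement that the interior of $S_{d+1}^j$ ``corresponds to'' $\Lambda_{j-1}<s<\Lambda_j$ should be a containment (the interior additionally requires $\lambda_i>0$ for $i\neq j$), but containment in pairwise disjoint open slabs is all the disjointness argument needs; and the identification $P_{d+1}=S_d\times[0,\tau]$ deserves one explicit sentence, though your displayed convex combination already proves the nontrivial inclusion.
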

\begin{proof}
  By construction the set of simplices $\mathcal{T}_P(S_d,\tau) = \{ S_{d+1}^1, \ldots, S_{d+1}^{d+1} \}$ is admissible. Furthermore, every simplex $S_{d+1}^i$ for $i=1,\ldots,d+1$ is contained in the hyperprism $P_{d+1}$ since $P_{d+1}$ is convex. It remains to show, that the union of all simplices $\mathcal{T}_P(S_d,\tau)$ is equal to the hyperprism, i.e. we have to show, that the volume of the union of all simplices $\mathcal{T}_P(S_d,\tau)$ coincides with the volume of the hyperprism. To do so, we transform the hyperprism $P_{d+1}$ to a reference hyperprism $\hat{P}_{d+1}$ where we easily can compute all the volume terms. For this, we define the reference Simplex $\hat{S}_d \subset \IR^d$ as
  \[ \hat{S}_d := [ \vec e_0, \vec e_1, \ldots, \vec e_d  ] = \mathrm{conv}(\{ \vec e_0, \vec e_1, \ldots, \vec e_d \}), \]
  with
  \begin{align*}
    \vec e_0 &:= (0,0,\ldots,0,0)^\top,\\
    \vec e_1 &:= (1,0,\ldots,0,0)^\top,\\
    \vec e_2 &:= (0,1,\ldots,0,0)^\top,\\
    &\;\;\;\vdots\\
    \vec e_d &:= (0,0,\ldots,0,1)^\top.
  \end{align*}
  Then we define the reference hyperprism $\hat{P}_{d+1}$ as
  \[ \hat{P}_{d+1} := [ \vec e_0,\ldots,\vec e_{d+1}; 1 ]. \]
  With the standard affine transformation we have a bijective mapping between the reference hyperprism $\hat{P}_{d+1}$ and the hyperprism $P_{d+1}$. This affine transformation consists of the standard transformation for $d$-dimensional simplices and a scaling in time direction. So we only have to compare the volume for the reference hyperprism. Now the volume of the reference simplex $\hat{S}_d$ is given by $\abs{\hat{S}_d} = \frac{1}{d!}.$ Hence the volume of the reference hyperprism is
  \[ \abs{\hat{P}_{d+1}} = \frac{1}{d!}. \]
  The simplices of our decomposition in the reference domain are given by
   \begin{equation*}
 \begin{aligned}
    \hat{S}_{d+1}^1 &:= [ \vec e_0^\prime,\vec e_1^\prime,\vec e_2^\prime,\ldots,\vec e_{d}^\prime, \vec e_0^{\prime\prime} ],\\
    \hat{S}_{d+1}^2 &:= [ \vec e_1^\prime, \vec e_2^\prime, \ldots, \vec e_{d}^\prime, \vec e_0^{\prime\prime}, \vec e_1^{\prime\prime} ],\\
    \hat{S}_{d+1}^3 &:= [ \vec e_2^\prime, \ldots,\vec e_{d}^\prime, \vec e_0^{\prime\prime}, \vec e_1^{\prime\prime}, \vec e_2^{\prime\prime} ],\\
    &\;\;\;\vdots\\
    \hat{S}_{d+1}^{d+1} &:= [ \vec e_{d}^\prime, \vec e_0^{\prime\prime}, \vec e_1^{\prime\prime}, \vec e_2^{\prime\prime}, \ldots, \vec e_{d}^{\prime\prime} ].
 \end{aligned}
\end{equation*}
It is easy to see, that these simplices have the same volume, i.e.
\[ \abs{\hat{S}_{d+1}^i} = \frac{1}{(d+1)!}, \quad \text{for } i=1,\ldots,d+1. \]
Hence we have
\[ \abs{\bigcup_{i=1}^{d+1} \hat{S}_{d+1}^i} = (d+1) \frac{1}{(d+1)!} = \frac{1}{d!} = \abs{\hat{P}_{d+1}}, \]
which completes the proof.
\end{proof}

\subsection{Admissible tensor product triangulations}

For a given $d$-dimensional triangulation $\mathcal{T}_h$ we now want to construct a tensor product extension by applying the algorithm (\ref{equ_decomposed_hyperprism}) for every simplex of the simplicial mesh $\mathcal{T}_h$. With Lemma \ref{lemma_decomposition_hyperprism} we know, that every hyperprism can be decomposed admissible into simplex elements. In this section we want to formulate conditions such that the overall space-time mesh is admissible. For this we need that the nodes of the simplices are ordered in a special way.
\begin{definition}[Consistently numbered]
 Let 
 \begin{equation*}
	\mathcal{T}_h = \{ S_d^i : S_d^i = [\vec p_1^i,\ldots,\vec p_{d+1}^i] \},
 \end{equation*}
 be an admissible $d$-dimensional simplex mesh. Then $\mathcal{T}_h$ is called \emph{consistently numbered}, iff for any two simplices $S_d^i, S_d^j \in \mathcal{T}_h$ with non-empty intersection, i.e. $S_d^i \cap S_d^j \neq \emptyset$, there exists indicies $k_1<\ldots<k_n$ and $\ell_1<\ldots<\ell_n$ with $n \in \IN, n \leq d+1$, such that
 \[ S_d^i \cap S_d^j = [\vec p_{k_1}^i,\ldots,\vec p_{k_n}^i] \equiv [\vec p_{\ell_1}^j,\ldots,\vec p_{\ell_n}^j]. \]
 Here ``$=$'' means that the two sets are the same and ``$\equiv$'' means that the two sets are equal and that also the numbering of the nodes is the same, i.e. $\vec p_{k_1}^i=\vec p_{\ell_1}^j,\ldots,\vec p_{k_n}^i=\vec p_{\ell_n}^j$.
\end{definition}
The definition of a consistently numbered triangulation can also be found in \cite{Bey2000} and it is important for the refinement of $d$-dimensional simplices, especially for $d \geq 4$. If an admissible mesh is consistently numbered we can prove the next Theorem.
\begin{theorem}
 Let $\mathcal{T}_h$ be an admissible $d$-dimensional triangulation which is consistently numbered and let $\tau>0$. Furthermore let 
 \[ \mathcal{T}_{h,\tau} := \{\mathcal{T}_P(S_d,\tau): S_d \in \mathcal{T}_h\} \]
 be the $(d+1)$-dimensional simplex mesh resulting by decomposing every hyperprism with the algorithm given in (\ref{equ_decomposed_hyperprism}). Then the space-time mesh $\mathcal{T}_{h,\tau}$ is admissible.
\end{theorem}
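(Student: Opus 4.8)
The plan is to reduce admissibility of the global mesh $\mathcal{T}_{h,\tau}$ to two local questions: admissibility \emph{inside} a single hyperprism, which is already settled by Lemma \ref{lemma_decomposition_hyperprism}, and compatibility of the decompositions of two neighbouring hyperprisms \emph{across} their common region. For the latter, fix two spatial simplices $S_d^i,S_d^j \in \mathcal{T}_h$ and note that the associated hyperprisms are the products $P_{d+1}^i = S_d^i \times [0,\tau]$ and $P_{d+1}^j = S_d^j \times [0,\tau]$. If $S_d^i \cap S_d^j = \emptyset$ the two hyperprisms are disjoint and nothing is to be shown, so assume the intersection is nonempty. Since $\mathcal{T}_h$ is admissible and consistently numbered, we have $S_d^i \cap S_d^j = [\vec p_{k_1}^i,\ldots,\vec p_{k_n}^i] \equiv [\vec p_{\ell_1}^j,\ldots,\vec p_{\ell_n}^j]$ with matching nodes and matching ordering, and the two hyperprisms meet exactly in $R := (S_d^i \cap S_d^j)\times[0,\tau]$, the time extrusion of this shared face.

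The key step is a combinatorial claim: restricting the decomposition $\mathcal{T}_P(S_d^i,\tau)$ to $R$ reproduces exactly the standard decomposition of the lower-dimensional hyperprism over $[\vec p_{k_1}^i,\ldots,\vec p_{k_n}^i]$ with its induced ordering. To prove this I would first observe that $R$ is an exposed face of $P_{d+1}^i$: writing $\lambda_1,\ldots,\lambda_{d+1}$ for the barycentric coordinates on $S_d^i$ and $K := \{k_1,\ldots,k_n\}$, the affine functional $\sum_{m \notin K}\lambda_m$ (extended trivially in the time variable) is nonnegative on $P_{d+1}^i$ and vanishes precisely on $R$. Because a linear functional attains its extremum over a simplex at a vertex, each simplex $S_{d+1}^m$ of the decomposition satisfies $S_{d+1}^m \cap R = \mathrm{conv}\{\text{vertices of } S_{d+1}^m \text{ lying in } R\}$. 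A short count of these vertices then shows that $S_{d+1}^m \cap R$ is full-dimensional in $R$ exactly when $m \in K$, say $m = k_r$, in which case its vertices are $\vec p_{k_r}^\prime,\ldots,\vec p_{k_n}^\prime,\vec p_{k_1}^{\prime\prime},\ldots,\vec p_{k_r}^{\prime\prime}$, i.e. the $r$-th simplex of the decomposition of $R$ viewed as a hyperprism over the ordered shared face; the remaining intersections are faces of these. The same computation applies verbatim to $P_{d+1}^j$.

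To finish, I would invoke consistent numbering. Because $\vec p_{k_r}^i = \vec p_{\ell_r}^j$ for $r = 1,\ldots,n$ with $k_1 < \cdots < k_n$ and $\ell_1 < \cdots < \ell_n$, the ordering induced on the shared face is identical from both sides. Since the standard decomposition of $R$ given by (\ref{equ_decomposed_hyperprism}) depends only on this induced ordering, the restrictions of $\mathcal{T}_P(S_d^i,\tau)$ and $\mathcal{T}_P(S_d^j,\tau)$ to $R$ coincide as simplicial complexes. Hence, for any $\sigma \in \mathcal{T}_P(S_d^i,\tau)$ and $\sigma' \in \mathcal{T}_P(S_d^j,\tau)$, one has $\sigma \cap \sigma' = (\sigma \cap R)\cap(\sigma' \cap R)$, an intersection of two cells of one and the same simplicial complex on $R$, which is a common face and therefore admissible. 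Combined with Lemma \ref{lemma_decomposition_hyperprism} for pairs of elements inside a single hyperprism, this yields admissibility of $\mathcal{T}_{h,\tau}$.

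I expect the main obstacle to be the combinatorial claim of the second paragraph, namely verifying that the staircase decomposition (\ref{equ_decomposed_hyperprism}) commutes with restriction to a face and that the resulting induced ordering is precisely the one forced by consistent numbering; the exposed-face observation, the disjoint case, and the final gluing are then routine.
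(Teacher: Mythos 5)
Your proposal is correct and follows essentially the same route as the paper's own proof: reduce to the extruded shared face $P_d = P_{d+1}^i \cap P_{d+1}^j$, show that both prism decompositions restrict there to the standard decomposition $\mathcal{T}_P(S_{d-1},\tau)$ of the lower-dimensional hyperprism, and conclude from consistent numbering that the two induced orderings (and hence the two restricted decompositions) coincide. The only differences are that you supply the exposed-face/vertex-counting argument that the paper leaves implicit in the phrase ``removing the nodes which are not shared together,'' and that you treat shared faces of arbitrary dimension $n$ rather than only facets ($n=d$), which is a mild strengthening rather than a change of method.
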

\begin{proof}
 With Lemma \ref{lemma_decomposition_hyperprism} we know, that every hyperprism is decomposed admissible into simplices. To obtain a global admissible mesh we have to prove, that the tensor product triangulations $\mathcal{T}_P(S_d^i,\tau)$ and $\mathcal{T}_P(S_d^j,\tau)$ for each neighboring elements $S_d^i, S_d^j \in \mathcal{T}_h$ are matching. Let
 \begin{equation*}
	S_d^i = [\vec p_1^i,\ldots,\vec p_{d+1}^i] \quad\text{and}\quad S_d^j = [\vec p_1^j,\ldots,\vec p_{d+1}^j]
 \end{equation*}
 with $S_d^i, S_d^j \in \mathcal{T}_h$ be some neighboring simplices and
 \begin{equation*}
 	P_d := P_{d+1}^i \cap P_{d+1}^j,
 \end{equation*}
 with
 \begin{equation*}
 P_{d+1}^i := [\vec p_1^i,\ldots,\vec p_{d+1}^i;\tau]~\text{and}~P_{d+1}^j := [\vec p_1^j,\ldots,\vec p_{d+1}^j;\tau]
 \end{equation*}
 be the intersecting hyperprism and
 \begin{align*}
\mathcal{T}_P^i &:= \{ S_{d+1} \cap P_d : S_{d+1} \in \mathcal{T}_P(S_d^i,\tau) \},\\
\mathcal{T}_P^j &:= \{ S_{d+1} \cap P_d : S_{d+1} \in \mathcal{T}_P(S_d^j,\tau) \},
 \end{align*} 
be the corresponding triangulations of $P_d$ obtained by $\mathcal{T}_P(S_d^i,\tau)$ and $\mathcal{T}_P(S_d^j,\tau)$. It remains to show, that the intersecting hyperprism $P_d$ is decomposed in the same way from both sides, i.e. that $\mathcal{T}_P^i = \mathcal{T}_P^j$. Since $\mathcal{T}_h$ is consistently numbered, there exists indices 
 $k_1<\ldots<k_n$ and $\ell_1<\ldots<\ell_n$ with $n = d$, such that
 \begin{multline}\label{theorem_admissible_equ_proof}
  S_d^i \cap S_d^j = S_{d-1}^i := [\vec p_{k_1}^i,\ldots,\vec p_{k_n}^i] \\ \equiv S_{d-1}^j := [\vec p_{\ell_1}^j,\ldots,\vec p_{\ell_n}^j].
 \end{multline}
 Therefore, the intersecting simplex $S_d^i \cap S_d^j$ is obtained by simply removing the nodes from $S_d^i$ or $S_d^j$ which are not shared together and furthermore they have the same ordering of the nodes. For the intersecting hyperpism $P_d$ the decompositions from both sides $\mathcal{T}_P^i$ and $\mathcal{T}_P^j$ are given by removing the nodes which are not shared together from the formula (\ref{equ_decomposed_hyperprism}) and with (\ref{theorem_admissible_equ_proof}) we have
 \[ \mathcal{T}_P^i = \mathcal{T}_P(S_{d-1}^i, \tau) \quad \text{and} \quad \mathcal{T}_P^j = \mathcal{T}_P(S_{d-1}^j, \tau). \]
 Since in equation (\ref{theorem_admissible_equ_proof}) also the node ordering of $S_{d-1}^i$ and $S_{d-1}^j$ is the same we also obtain
 \[ \mathcal{T}_P(S_{d-1}^i, \tau) = \mathcal{T}_P(S_{d-1}^j, \tau), \]
 which implies that $\mathcal{T}_P^i = \mathcal{T}_P^j$.
\end{proof}

\begin{remark}
To obtain an admissible space-time mesh $\mathcal{T}_{h,\tau}$ we only have to ensure, that the nodes of the spatial mesh $\mathcal{T}_h$ are consistently numbered. This can be easily obtained by sorting for each simplex $S_d \in \mathcal{T}_h$ the local nodes with respect to the global node numbers.
\end{remark}

\subsection{Tensor product triangulations for moving domains}\label{sec:mov_dom}

If the movement of a computational domain is known in advance we can generate admissible space-time meshes by applying the methods from above. The idea is to move the points at the top of the tensor-product extension. Assuming that the displacement of points on the boundary $\Gamma(t)$ is governed by a function
\begin{align*}
\vec g_\text{mov}(\vec X,t)&\colon \Gamma(0) \times (0,T) \to \mathbb R^d.
\end{align*}
Then a point $\vec x \in \Gamma(t)$ can be written as
\begin{align*}
\vec x = \vec X + \vec g_\text{mov}(\vec X,t),
\end{align*}
where $\vec X \in \Gamma(0)$. Recall the definition of a hyperprism in Definition \ref{def_hyperprism}. Instead of using $\vec p_i^{\prime\prime} := (\vec p_i^\top , \tau)^\top$ on the surface we can apply the displacement and use $\vec p_i^{\prime\prime}:=(\vec p_i^\top + \vec g_\text{mov}(\vec p_i,\tau)^\top,\tau)^\top$ for all boundary points of the simplex mesh that are subject to a movement. The remaining generation of the 4D mesh stays untouched. For boundary movements that are of small magnitude and do not change the topology of the initial geometry this can be sufficient. For stronger yet topology preserving movements this concept would create degenerating simplex elements. A remedy to this is to use the movement $\vec g_\text{mov}$ as Dirichlet datum for a vector Laplacian or a linear elasticity problem. Then the resulting displacement is applied to all simplex points in the domain.  
For more on mesh smoothing we refer to \cite{Lopez2008,Liakopoulos2006}.

In the case of stronger displacements or even topology changes re-meshing would be required and we need further meshing algorithms to connect different spatial domains in space and time, especially for four-dimensional space-time meshes this remains a future research topic. 


If the the movement of the computational domain is not known in advance we can solve the problem on a coarse spatial grid with coarse time steps to obtain a coarse approximation for the movement. Afterwards we can construct the coarse space-time mesh with the methods given in this work. By using adaptive schemes in space and time we further can refine the space-time domain adaptively and move the points in the space time domain by the computed finer approximations. Note that the movement of the points has to be only done in the range of the approximation error, which is usually small. Of course this is also considered as a further research topic.

\subsection{Visualization}\label{sec:visualization}
Here we want to address the issue of visualizing results for four-\-dimens\-ional triangulations $\mathcal T_h$. In applications it is desired to visualize results at given time instances $t_k \in [0,T]$. The main idea is to cut the decomposition $\mathcal T_h$ into a finite number of three-dimensional manifolds. For this we need to have a hyperplane to calculate the intersections with the decomposition.
\begin{definition}[Hyperplane]
	Let $\vec p_0 \in \mathbb R^4$ be arbitrary and let $\vec p_1$, $\vec p_2$, $\vec p_3$ and $\vec p_4 \in \mathbb R^4$ be linear independent. Then the set
	\begin{multline*}
		H_4 := \left\{ \vec x \in \mathbb R^4 : \vec x = \vec p_0 + \mu_1 \vec p_1 + \mu_2 \vec p_2 + \mu_3 \vec p_3 \right. \\ \left. \text{for }\mu_1,\mu_2,\mu_3 \in \mathbb R\right\}
	\end{multline*}
	is called a \emph{hyperplane}.
\end{definition}
To cut a given decomposition $\mathcal T_h$ with a hyperplane $H_4$, we have to cut every element $\tau_k \in \mathcal T_h$ with the hyperplane. For this we have to calculate for every edge $e_i = (\vec x_{i_1} , \vec x_{i_2})$, $i = 1,\ldots,10$ of $\tau_k$, the intersection with the hyperplane. A point $\vec x \in e_i$ can be written as
\begin{align*}
	\vec x = \vec x_{i_1}+\lambda\left(\vec x_{i_2}-\vec x_{i_1}\right)
\end{align*}
for a given $\lambda \in [0,1]$. Hence, an intersection point $\vec \xi_i$ of the edge $e_i$ with the hyperplane $H_4$ has to satisfy
\begin{align*}
	\vec x_{i_1} + \lambda\left(\vec x_{i_2}- \vec x_{i_1}\right) = \vec p_0 + \mu_1 \vec p_1+\mu_2 \vec p_2 + \mu_3 \vec p_3
\end{align*}
or in matrix notation
\begin{align*}
	\tens A_i := \begin{pmatrix}
	\vec p_1 & \vec p_2 & \vec p_3 & \vec x_{i_1} - \vec x_{i_2}
	\end{pmatrix}
	\begin{pmatrix}
	\mu_1 \\
	\mu_2 \\
	\mu_3 \\
	\lambda
	\end{pmatrix}
	=
	\vec x_{i_1} - \vec p_0.
	\end{align*}
The matrix $\tens A_i$ is invertible iff the vector $\vec x_{i_1} - \vec x_{i_2}$ is linear independent to the vectors $\vec p_1$, $\vec p_2$, $\vec p_3$. In fact, the matrix $\tens A_i$ is not invertible if the edge $e_i$ is parallel to the hyperplane $H_4$. In this case there exists either no intersection point or infinitely many. If the matrix is invertible we can calculate the coefficients $\mu_1$,$\mu_2$,$\mu_3$ and $\lambda \in \mathbb R$ uniquely. Let $D_k$ denote the set of all intersection points of the element $\tau_k \in \mathcal T_h$ with the hyperplane $H_4$. We distinguish two relevant cases
\begin{enumerate}
 \item If $\abs{D_k} = 4$, then the intersection points form a tetrahedron
 \item If $\abs{D_k} = 6$, then the intersection points for a general irregular prism.
\end{enumerate}
If we use the special vectors 
\begin{align*}
	\vec p_0 := t_* \vec e_t,~\vec p_1 := \vec e_x, \vec p_2 := \vec e_y, \vec p_3 := \vec e_z
\end{align*}
for a given $t_* \in [0,T]$ we can now calculate a three-dimensional object which can be visualized with existing software tools for example \cite{Paraview2015}.

\section{Numerical Results}\label{sec:results}
In this section we will present first numerical examples. Starting point is the discrete variational formulation \eqref{eq:stokes_discrete:1}-\eqref{eq:stokes_discrete:2}. This can be equivalently written as the following block system
\begin{align}
	\begin{pmatrix}
	\tens K_h & -\tens B_h^\top \\
	\tens B_h & \tens D_h
	\end{pmatrix}
	\begin{pmatrix}
	\vec U \\
	\vec P
	\end{pmatrix}
	= \begin{pmatrix}
	\vec F_1 \\
	\vec F_2
	\end{pmatrix}.
\end{align}
It is worth noting, that due to the discretization of the time derivative we have that $\tens K_h \neq \tens K_h^\top$. The four dimensional computational geometries as well as the resulting linear systems in the subsequent numerical examples were solved with the software package \textsc{Neshmet} developed by the authors. In particular we used a preconditioned GMRes method. As preconditioner we used the following:
\begin{align}
\tens P := \begin{pmatrix}
\overline{\tens K}_h & \\
 & \overline{\tens S}_h
\end{pmatrix}
\end{align}
where $\overline{\tens K}_h$ is chosen as a component-wise algebraic multigrid and $\overline{\tens S}_h$ is chosen as ILU(2)-factorization of $\tens D_h + \tens B_h \mathrm{diag}(\tens K_h)^{-1} \tens B_h^\top$. These preconditioners were taken from the HYPRE library \cite{Falgout2006}.
\subsection{Robin Boundary Conditions for Simulating Valves} \label{sec:robinBC}
In the subsequent examples we want to simulate opening and closing valves. This means, that we need to switch between and inflow and a no-slip condition. To this end we used the following configuration for \eqref{eq:robin_BC}: We set $\vec g_R \equiv \vec 0$. Further we use the following Robin coefficients for outflow
\begin{align*}
\alpha_R(x,t) := \begin{cases}
10^6 & \mbox{if } t \in [0,\frac{1}{2}) \\
0 & \mbox{if } t \in [\frac{1}{2},1]
\end{cases},
\end{align*}
and the following for inflow:
\begin{align*}
\alpha_R(x,t) := \begin{cases}
0 & \mbox{if } t \in [0,\frac{1}{2}) \\
10^{6} & \mbox{if } t \in [\frac{1}{2},1]
\end{cases}.
\end{align*}
\subsection{First Example}
In the first example we consider Stokes flow in a diaphragm pump. The geometry consists of the intersection of two cylinders. The first one has its main axis aligned with the $z$-axis with a radius of $0.8$ and ranges between $z=-0.4$ and $z=0.4$. The second cylinder has its main axis aligned with the $x$-axis with a radius of $0.2$ and ranges from $x=-1$ to $x=1$. A front view of the geometry is depicted in Figure \ref{fig:pump_front}. 
\begin{figure}[htbp]
	\centering
	\begin{tikzpicture}[scale=0.8, every node/.style={scale=0.8}]
	
	\coordinate (A) at (-1,-0.2);
	\coordinate (B) at (-0.8,-0.2);
	\coordinate (C) at (-0.8,-0.4);
	\coordinate (D) at (0.8,-0.4);
	
	\coordinate (E) at (0.8,-0.2);
	\coordinate (F) at (1.0,-0.2);
	\coordinate (G) at (1.0,0.2);
	\coordinate (H) at (0.8,0.2);
	
	\coordinate (I) at (0.8,0.35);
	\coordinate (J) at (0.75,0.4);
	\coordinate (K) at (-0.75,0.4);
	\coordinate (L) at (-0.8,0.35);
	
	\coordinate (M) at (-0.8,0.2);
	\coordinate (N) at (-1.0,0.2);
	
	\fill[black!10!white] ($4*(A)$) -- ($4*(B)$) -- ($4*(C)$) --($4*(D)$) --($4*(E)$) -- ($4*(F)$) -- ($4*(G)$) -- ($4*(H)$) -- ($4*(I)$) --($4*(J)$) -- ($4*(K)$) -- ($4*(L)$) -- ($4*(M)$) -- ($4*(N)$) --cycle;
	
	\draw[-,color=black!30!red,thick] ($4*(A)$) -- ($4*(B)$);
	\draw[-,color=black!30!red,thick] ($4*(B)$) -- ($4*(C)$);
	\draw[-,color=black!30!red,thick] ($4*(C)$) -- ($4*(D)$);
	\draw[-,color=black!30!red,thick] ($4*(D)$) -- ($4*(E)$);
	\draw[-,color=black!30!red,thick] ($4*(E)$) -- ($4*(F)$);
	\draw[-,thick] ($4*(F)$) -- ($4*(G)$);
	\draw[-,color=black!30!red,thick] ($4*(G)$) -- ($4*(H)$);
	\draw[-,color=black!30!red,thick] ($4*(H)$) -- ($4*(I)$);
	\draw[-,color=black!30!red,thick] ($4*(I)$) -- ($4*(J)$);
	\draw[-,color=black!30!green,thick] ($4*(J)$) -- ($4*(K)$);
	\draw[-,color=black!30!red,thick] ($4*(K)$) -- ($4*(L)$);
	\draw[-,color=black!30!red,thick] ($4*(L)$) -- ($4*(M)$);
	\draw[-,color=black!30!red,thick] ($4*(M)$) -- ($4*(N)$);
	\draw[-,thick] ($4*(N)$) -- ($4*(A)$);
	
	\node at (0,0) {$\Omega(0)$};
	
	\node[above] at ($2*(J)+2*(K)$) {$\Gamma_{D,m}(0)$};
	\node[right] at ($2*(F)+2*(G)$) {$\Gamma_{R,\mathrm{in}}$};
	\node[left] at ($2*(N)+2*(A)$) {$\Gamma_{R,\mathrm{out}}$};
	\node[below] at ($2*(C)+2*(D)$) {$\Gamma_{D}$};
	\end{tikzpicture}
	\caption{Front view of the initial geometry $\Omega(0)$. Blue boundaries belong to $\Gamma_D$.}
	\label{fig:pump_front}
\end{figure}
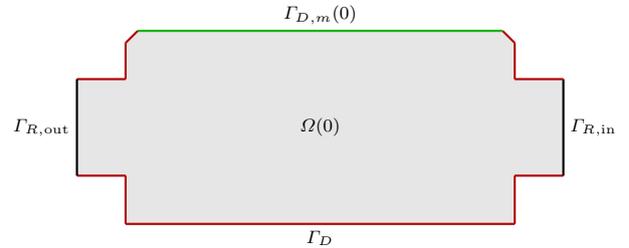
The movement of the boundary $\Gamma(t)$ was prescribed as follows
\begin{multline*}
\vec g_\text{mov} (t, \vec X) :=
\left(0.4+\mathrm{sin}^2(\pi t)\left(1-\frac{X(0)^2+X(1)^2}{0.75^2}\right)\right)\vec e_z\\ - \vec X
\end{multline*}
for $\vec X \in \Gamma_{D,m}(0)$ and $\vec 0$ else. The following boundary conditions are used:
\begin{itemize}
	\item $\vec u = \vec 0$ on $\Gamma_D$
	\item $\vec u =  \frac{\partial}{\partial t} \vec g_\text{mov}$ on $\Gamma_{D,m}(t)$
	\item On $\Gamma_{R,\text{in}}$ and $\Gamma_{R,\text{out}}$ we used the Robin boundary conditions discussed in Section \ref{sec:robinBC}
	\item The initial condition for $\vec u$ was set to $\vec u(0,\vec x) = \vec 0$.
\end{itemize}
The triangulation of the resulting 4D geometry was accomplished using the tools described in Section \ref{sec:meshing4d}. The resulting mesh consisted of 951360 pentatopes. 
\begin{figure}[htbp]
	\centering
	\begin{adjustwidth}{4em}{4em}
		\subfloat[][]{\includegraphics[width=0.49\linewidth]{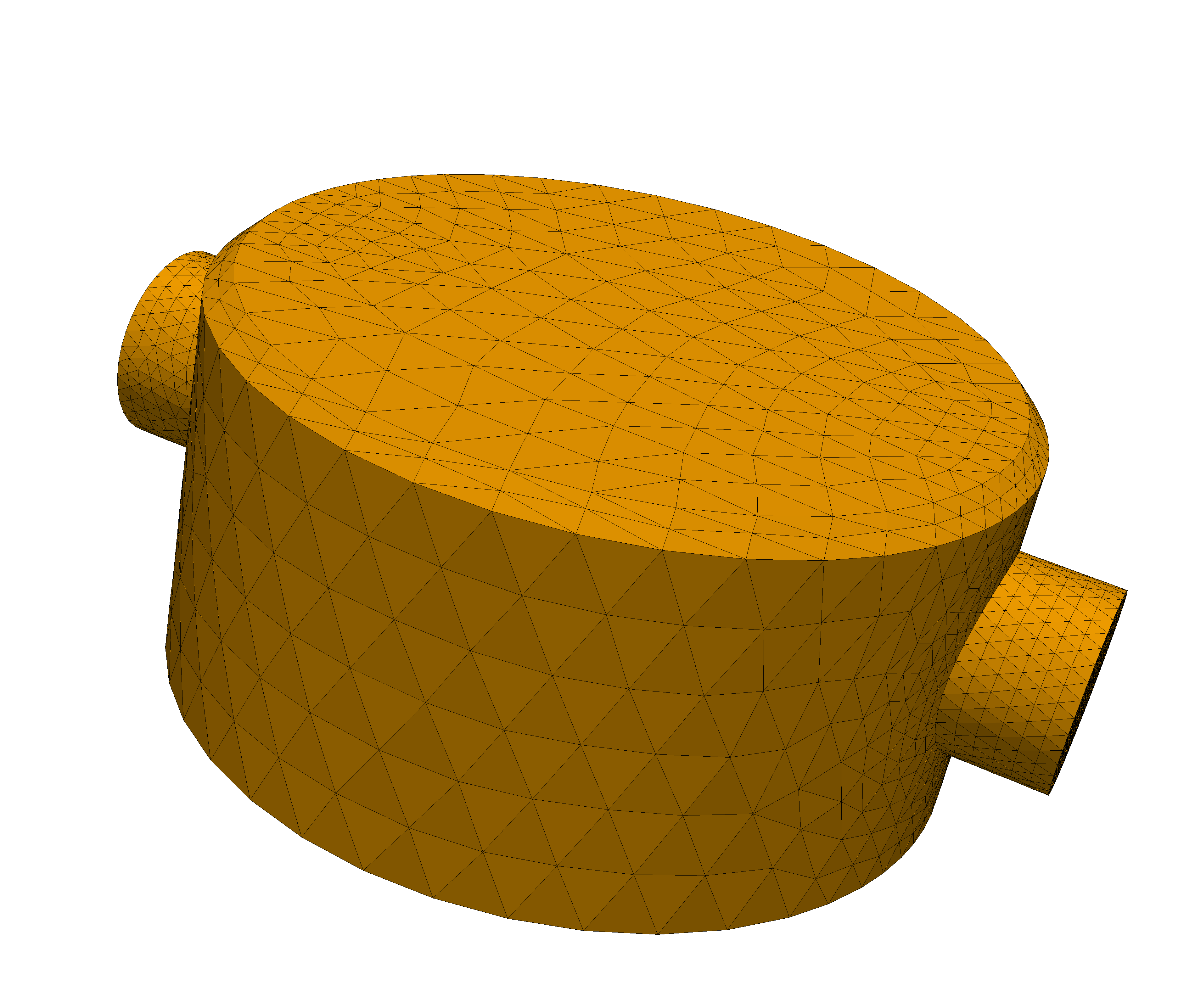}}%
		\subfloat[][]{\includegraphics[width=0.49\linewidth]{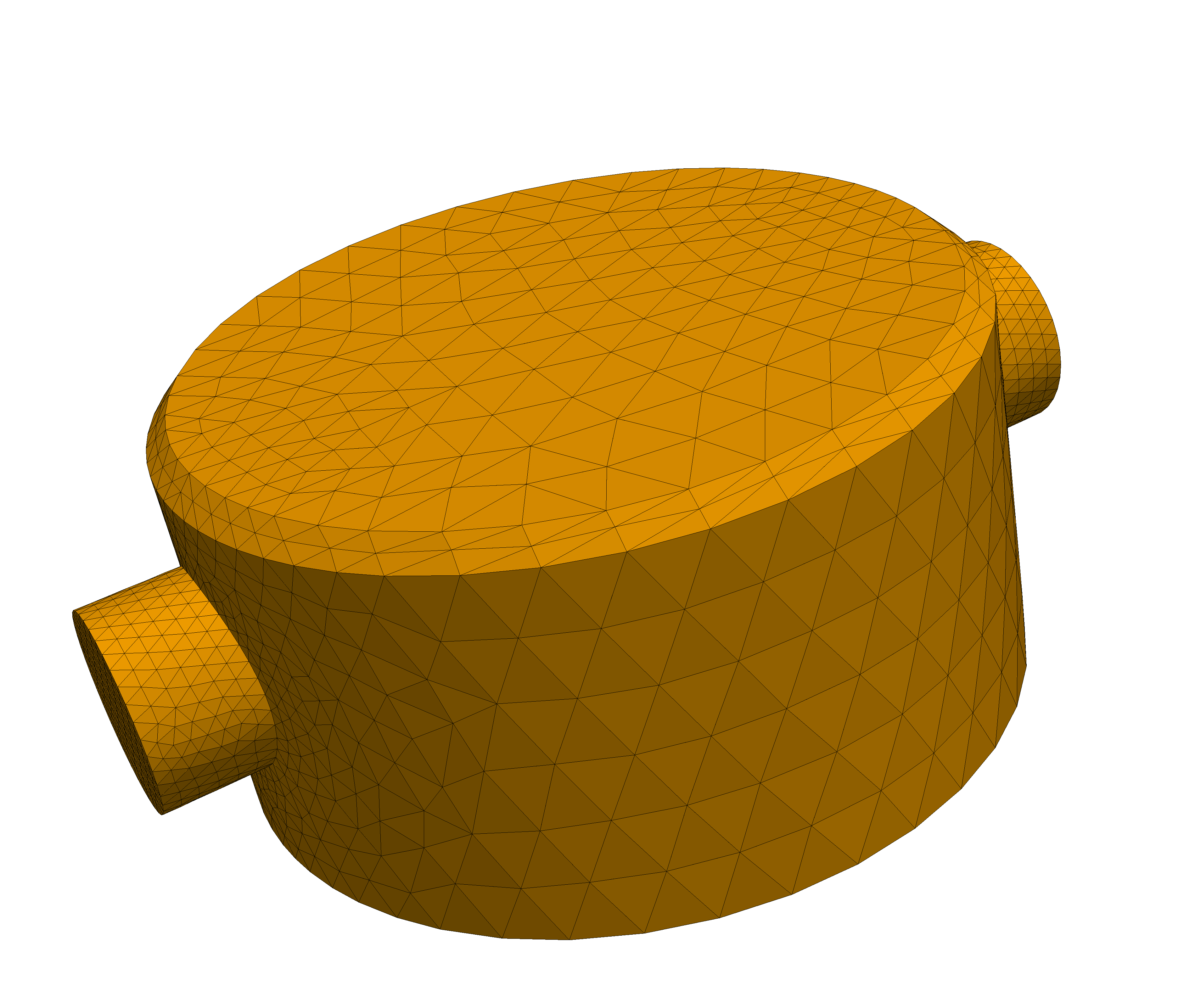}}\\%
		\subfloat[][]{\includegraphics[width=0.49\linewidth]{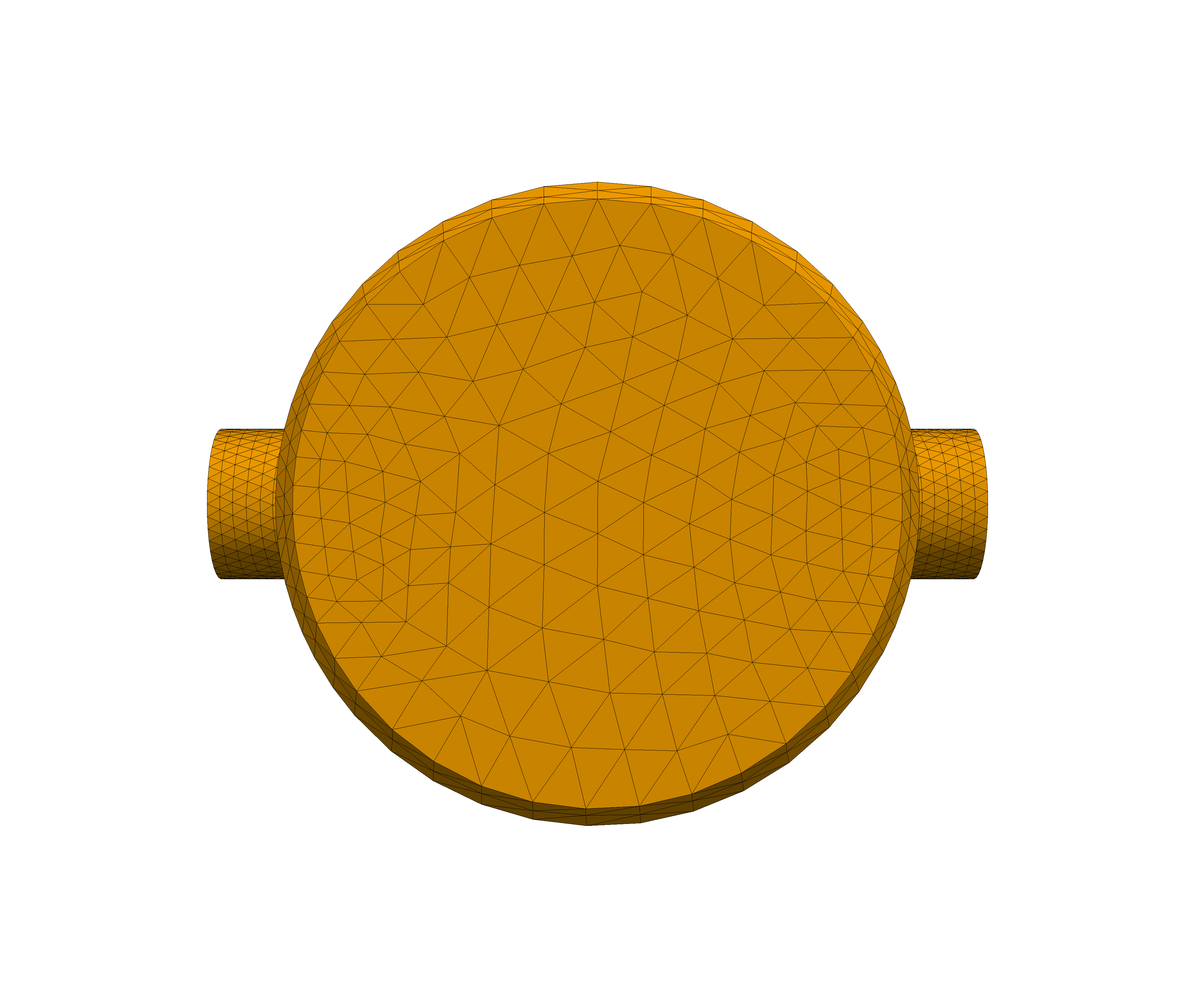}}
		\subfloat[][]{\includegraphics[width=0.49\linewidth]{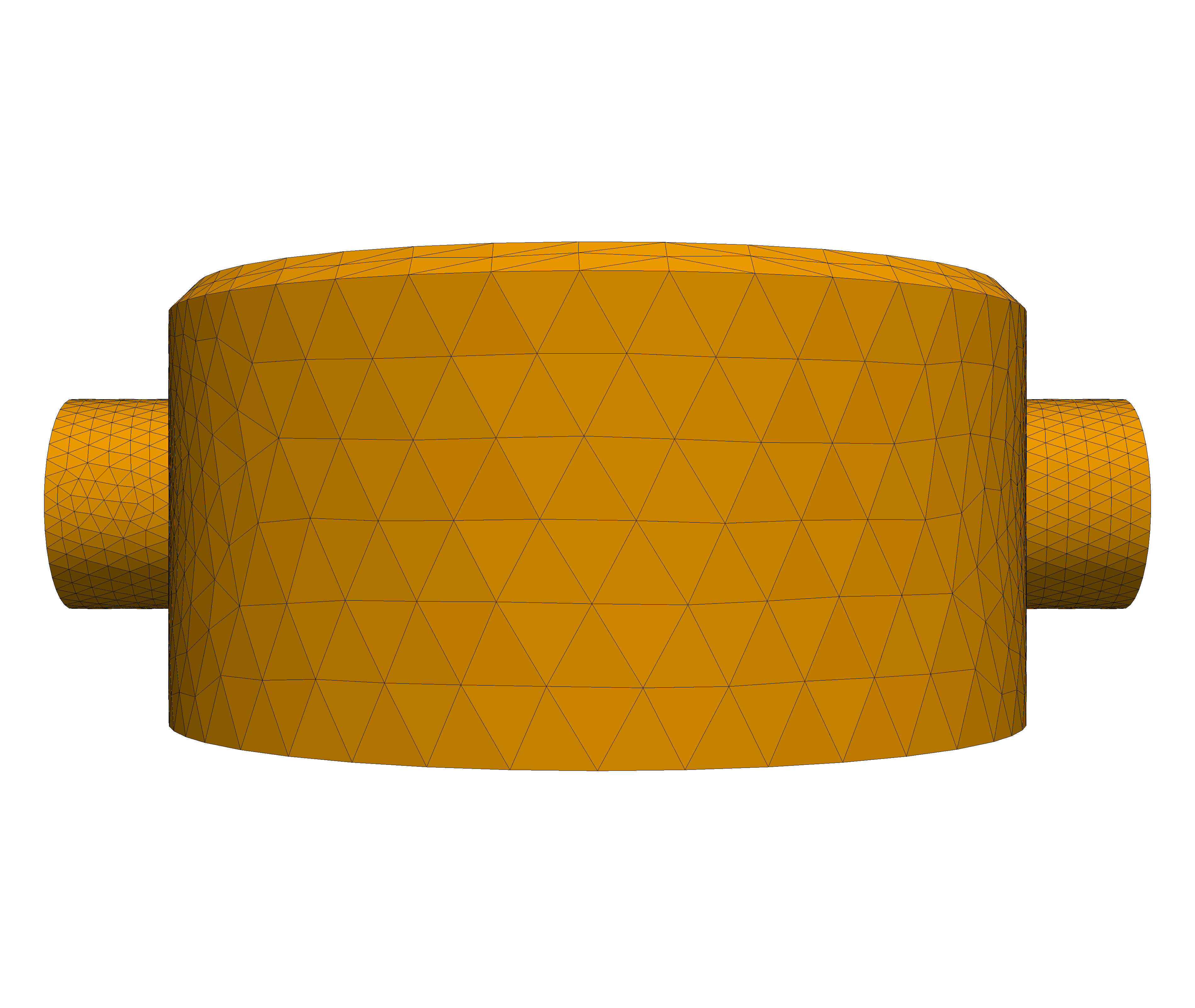}}
	\end{adjustwidth}
	\caption{Initial Triangulation $\Omega(0)$}
	\label{fig:initial_triangulation}
\end{figure}

Some snapshots of the triangulation of the moving domain are depicted in Figure \ref{fig:triangulation_snapshots}. This snapshots were generated by slicing through the 4D mesh along the time axis as described in Section \ref{sec:visualization}. 
\begin{figure}[htbp]
	\centering
	\begin{adjustwidth}{4em}{4em}
		\subfloat[][$\Omega(0.5)$]{\includegraphics[width=0.49\linewidth]{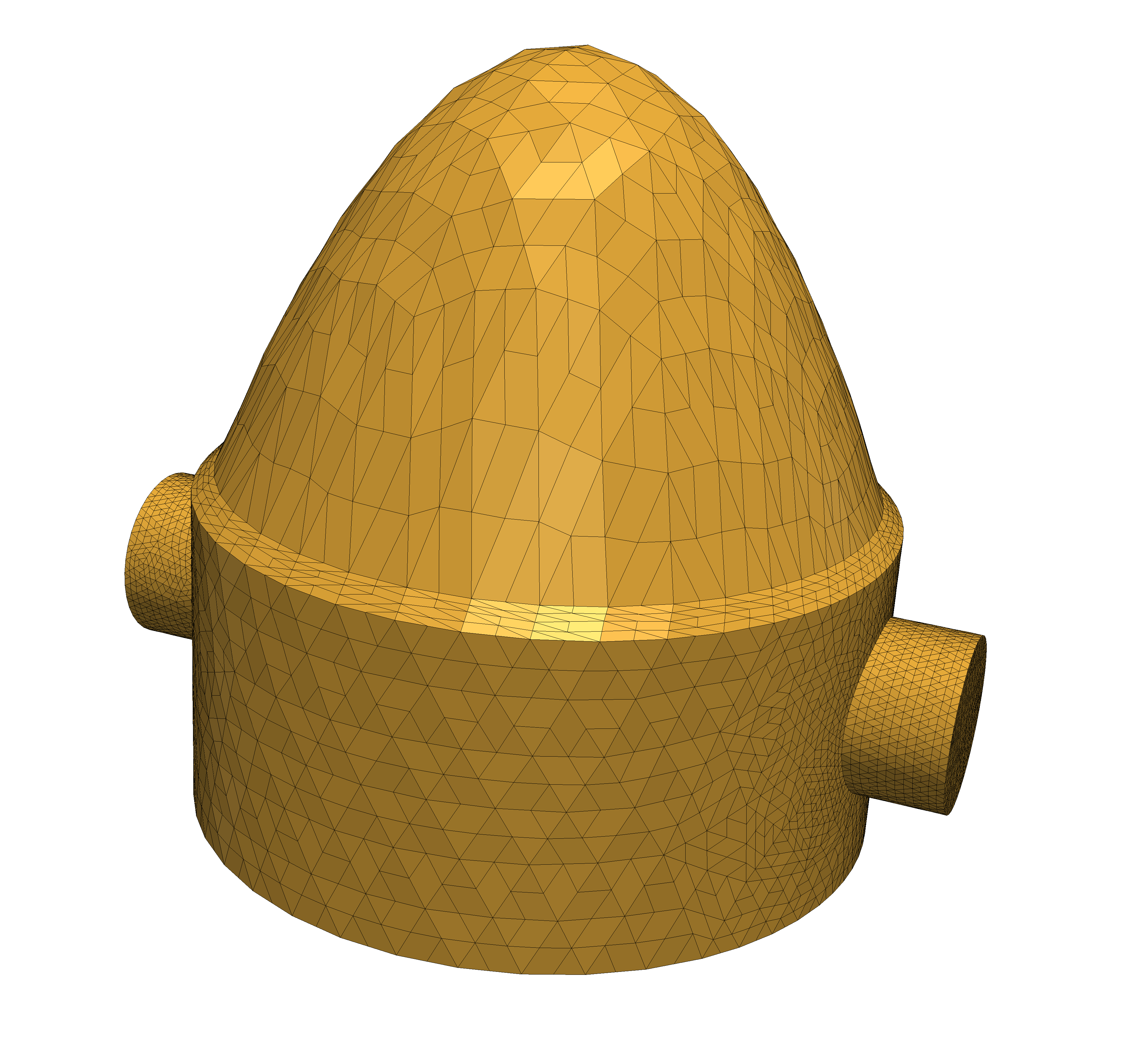}}%
		\subfloat[][$\Omega(0.5)$]{\includegraphics[width=0.49\linewidth]{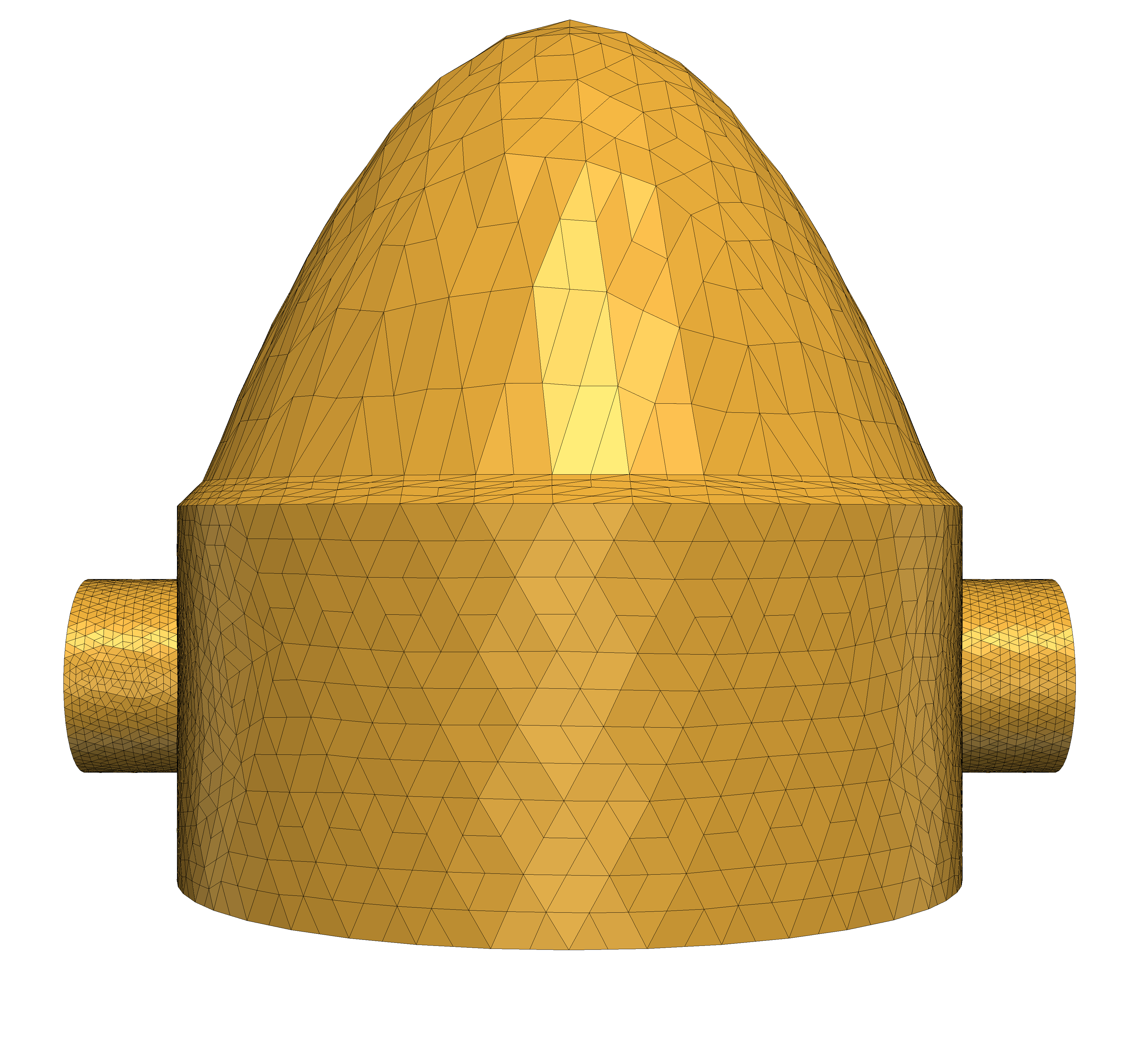}}\\%
		\subfloat[][$\Omega(0.65)$]{\includegraphics[width=0.49\linewidth]{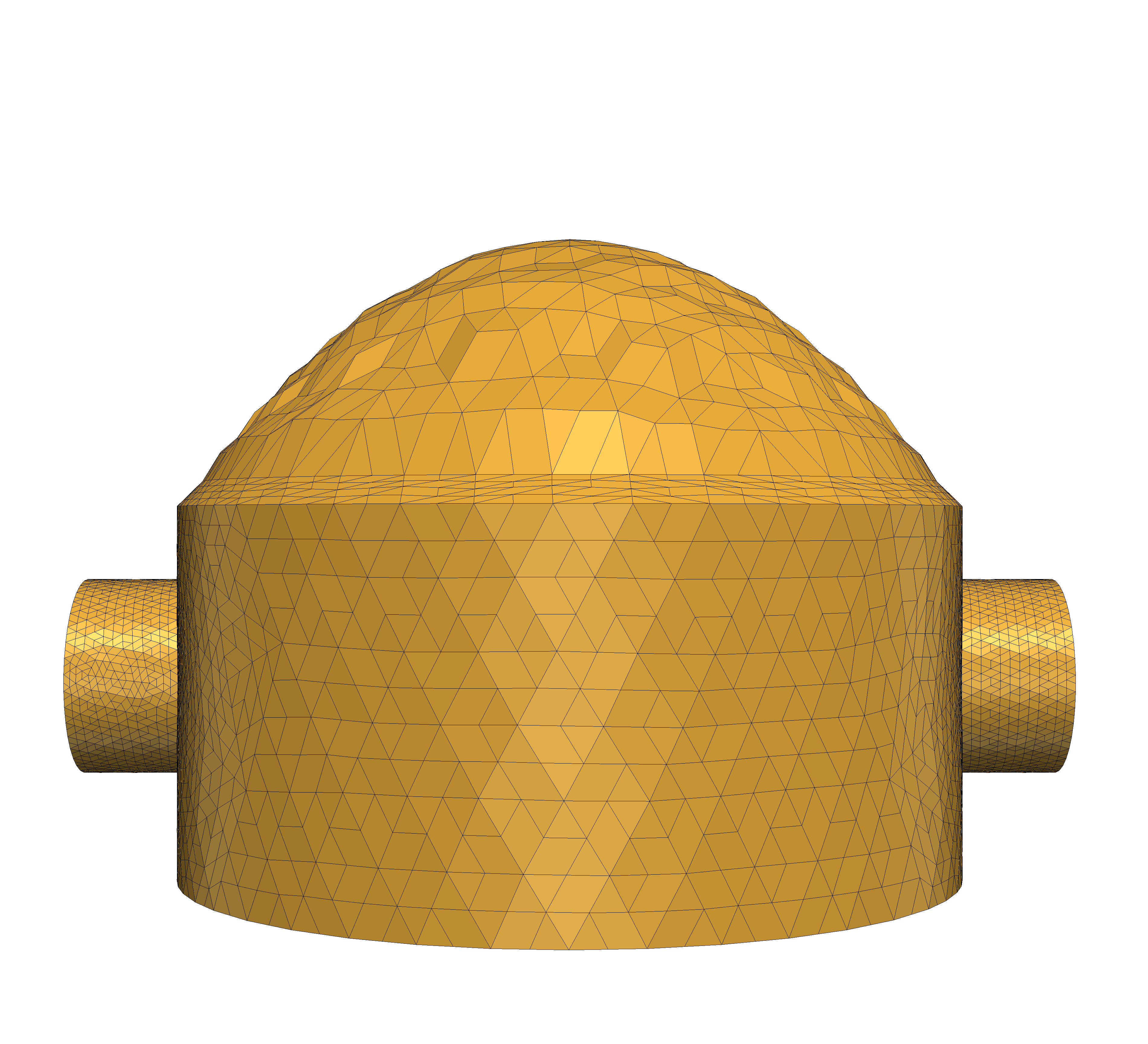}}
		\subfloat[][$\Omega(0.8)$]{\includegraphics[width=0.49\linewidth]{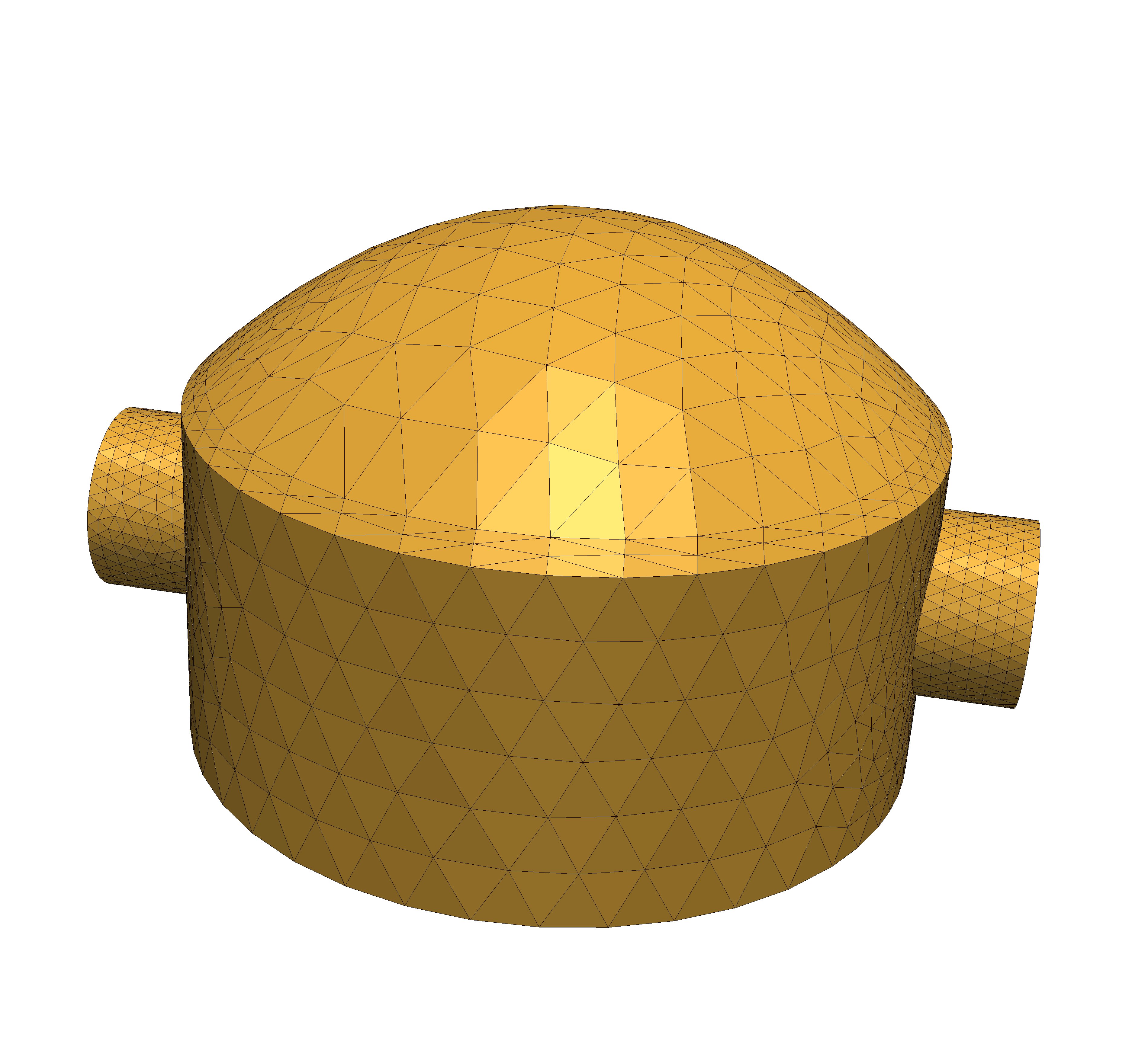}}
	\end{adjustwidth}
	\caption{Snapshots of the Triangulations $\Omega(t)$}
	\label{fig:triangulation_snapshots}
\end{figure}
The polynomial degree for $\vec u_h$ was set to $p=1$ and $q=0$ for the pressure variable $p_h$. This resulted in 14270400 degrees of freedom for $\vec u_h$ and 951360 degrees of freedom for $p_h$. We needed 95 GMRes-iterations for achieving a relative error of $1E-5$. In Figure \ref{fig:results:1} one can see the resulting flow and pressure at given time stamps, which were again produced by slicing the 4D geometry along the time axis.

\begin{figure*}[htbp]
	\centering
	\begin{adjustwidth}{0.0cm}{0.0cm}
		\subfloat[][$t=0.1$]{\includegraphics[width=0.49\linewidth]{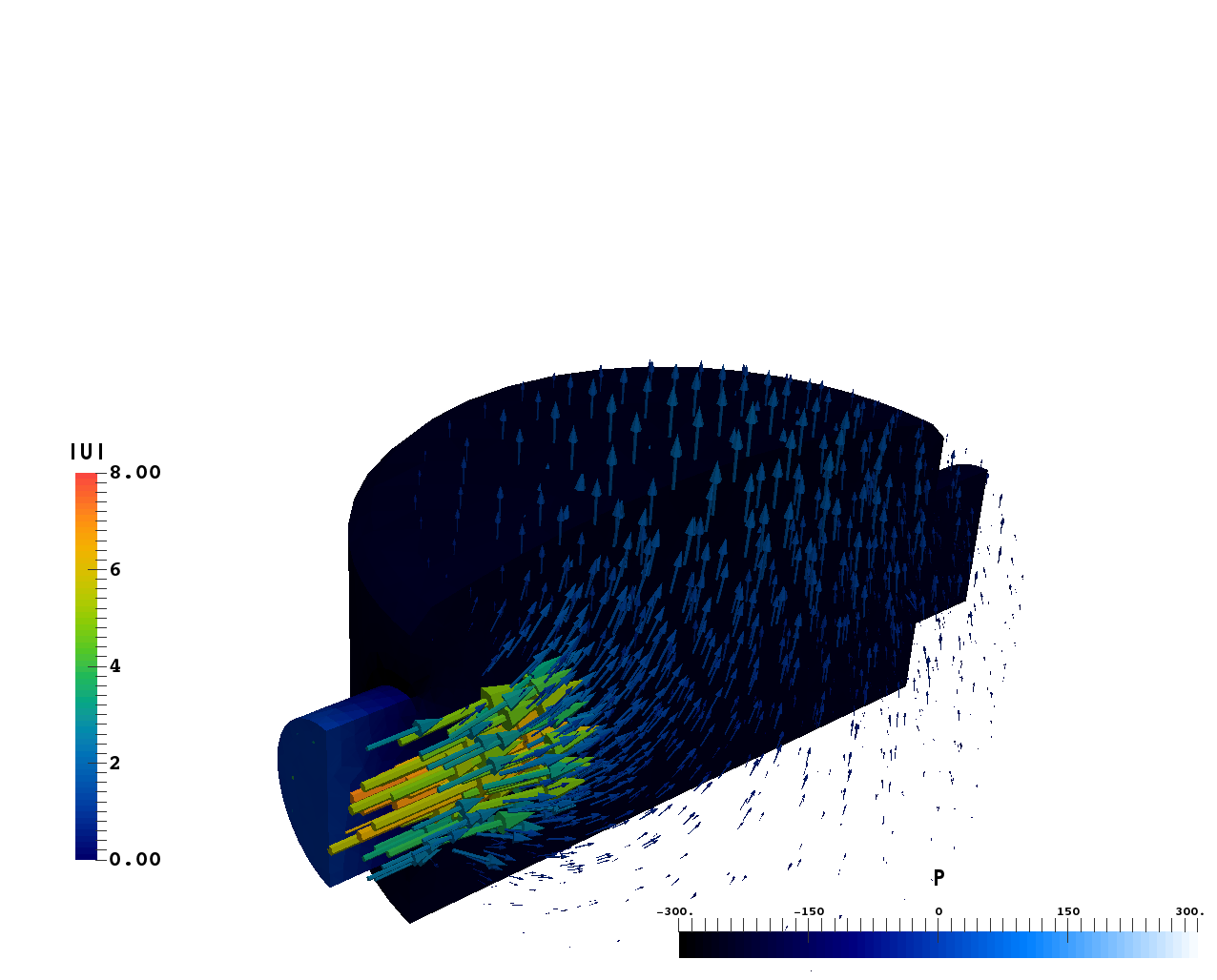}}%
		\subfloat[][$t=0.35$]{\includegraphics[width=0.49\linewidth]{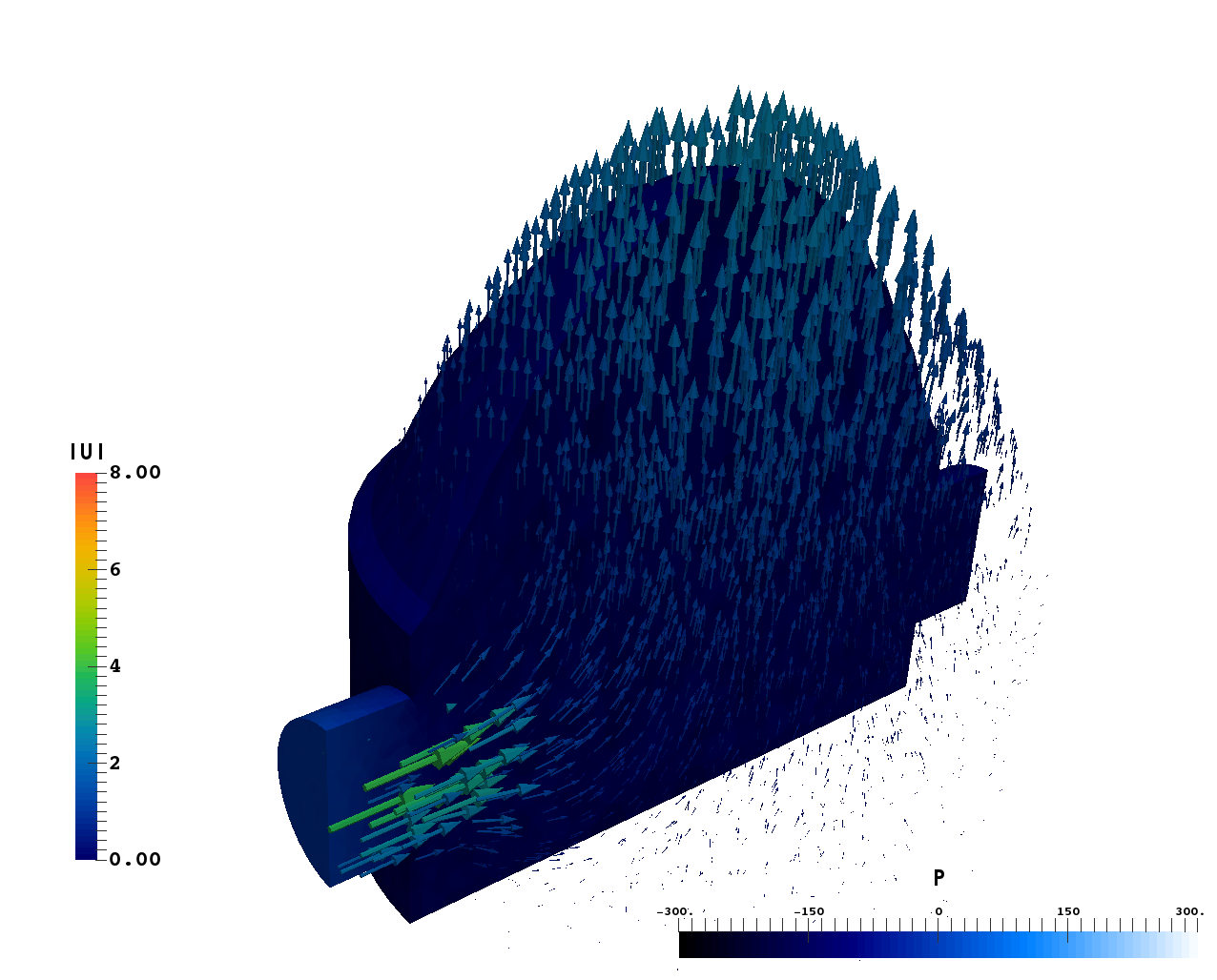}}\\%
		\subfloat[][$t=0.7$]{\includegraphics[width=0.49\linewidth]{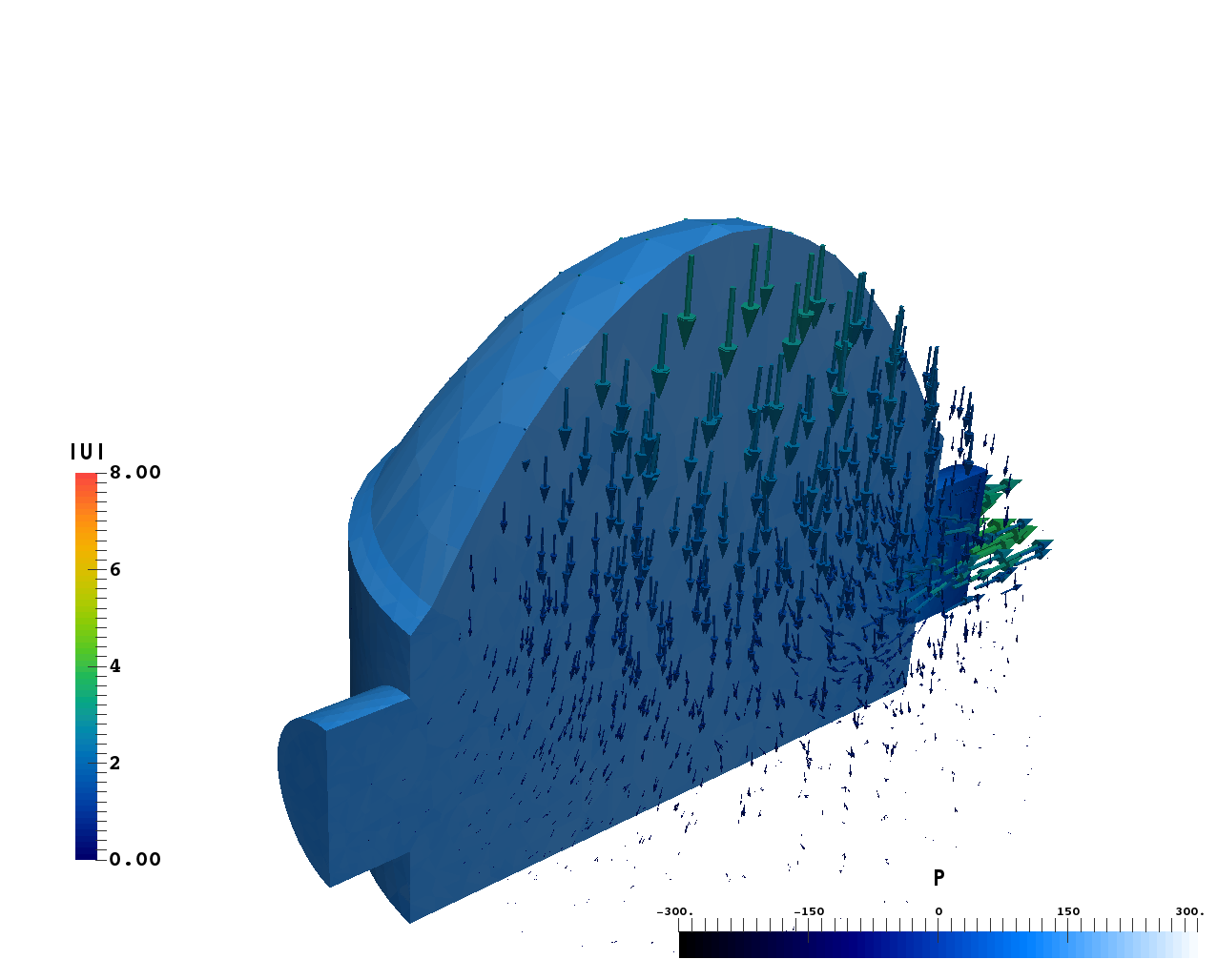}}
		\subfloat[][$t=1.0$]{\includegraphics[width=0.49\linewidth]{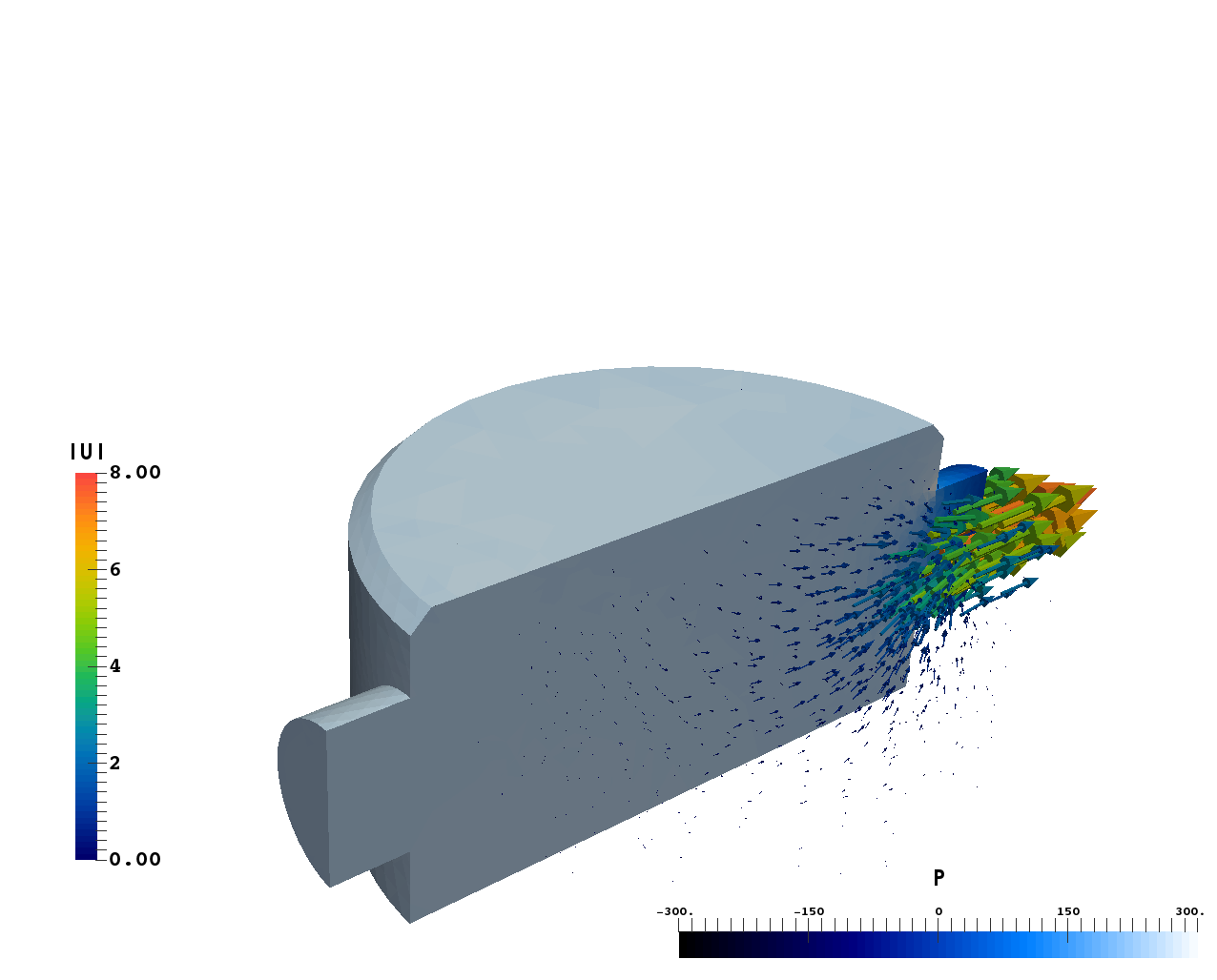}}
	\end{adjustwidth}
	\caption{Snapshots of the solution. Additionally we have cut along the $y$-axis.}
	\label{fig:results:1}
\end{figure*}

\subsection{Second Example}
For the second example we considered a Y-shaped pipe. A schematic view is depicted in Figure \ref{fig:pump_front}. We prescribed the following movement of $\Gamma(t)$:
\begin{align*}
\vec g_\text{mov}(\vec X, t) := \begin{cases}
\vec 0 & \mbox{for } \vec X \notin \Gamma_{D,m} \cup \widetilde{\Gamma_{D,m}} \\
4\frac{\abs{X(2)+3}}{7} \mathrm{sin}(\pi t)^2 \vec e_z & \mbox{for } \vec X \in \Gamma_{D,m} \cup \widetilde{\Gamma_{D,m}}
\end{cases}.
\end{align*}
Some snapshots of the domain movement are depicted in Figure \ref{fig:triangulation_snapshots_ypump}. 
The boundary conditions were set to
\begin{itemize}
	\item $\vec u = \vec 0$ on $\Gamma_D \cup \widetilde{\Gamma_{D,m}}(t)$
	\item $\vec u =  \frac{\partial}{\partial t} \vec g_\text{mov}$ on $\Gamma_{D,m}(t)$
	\item On $\Gamma_{R,\text{in}}$ and $\Gamma_{R,\text{out}}$ we used the Robin boundary conditions discussed in Section \ref{sec:robinBC}
	\item The initial condition for $\vec u$ was set to $\vec u(0,\vec x) = \vec 0$.
\end{itemize}
The resulting 4D mesh consisted of 2618880 pentatopes. With the same ansatz spaces as used for Example 1 we have 39283200 degrees of freedom for $\vec u_h$ and 2618880 degrees of freedom for $p_h$. We needed 107 GMRes-iterations for achieving a relative error of $1E-5$. In Figure \ref{fig:results:2} we have depicted some results.
\begin{figure}[htbp]
	\centering
	\begin{tikzpicture}
	\coordinate (A) at (0,60.0);
	\coordinate (B) at (-11.5192,79.9519);
	\coordinate (C) at (-63.4808,49.9519);
	\coordinate (D) at (-30.0,-8.03848);
	
	\coordinate (E) at (-30.0,-30.0);
	\coordinate (F) at (-30.0,-105.0);
	\coordinate (G) at (30.0,-105.0);
	\coordinate (H) at (30.0,-30.0);
	
	\coordinate (I) at (30.0,-8.03848);
	\coordinate (J) at (63.4808,49.9519);
	\coordinate (K) at (11.5192,79.9519);
	
	\fill[black!10!white] ($0.05*(A)$) -- ($0.05*(B)$) -- ($0.05*(C)$) --  ($0.05*(D)$) -- ($0.05*(E)$) -- ($0.05*(F)$) -- ($0.05*(G)$) -- ($0.05*(H)$) -- ($0.05*(I)$) -- ($0.05*(J)$) --($0.05*(K)$) -- cycle;
	
	\draw[-,color=orange,thick] ($0.05*(A)$) -- ($0.05*(B)$);
	\draw[-,thick] ($0.05*(B)$) -- ($0.05*(C)$);
	\draw[-,color=orange,thick] ($0.05*(C)$) -- ($0.05*(D)$);
	\draw[-,color=orange,thick] ($0.05*(D)$) -- ($0.05*(E)$);
	\draw[-,color=black!30!blue,thick] ($0.05*(E)$) -- ($0.05*(F)$);
	\draw[-,color=black!30!red,thick] ($0.05*(F)$) -- ($0.05*(G)$);
	\draw[-,color=black!30!blue,thick] ($0.05*(G)$) -- ($0.05*(H)$);
	\draw[-,color=orange,thick] ($0.05*(H)$) -- ($0.05*(I)$);
	\draw[-,color=orange,thick] ($0.05*(I)$) -- ($0.05*(J)$);
	\draw[-,thick] ($0.05*(J)$) -- ($0.05*(K)$);
	\draw[-,color=orange,thick] ($0.05*(K)$) -- ($0.05*(A)$);
	\draw[dashed] ($0.05*(E)$) -- ($0.05*(H)$);
	\node at (0,0) {$\Omega(0)$};
	\node[below] at ($0.025*(F)+0.025*(G)$) {$\Gamma_{D,m}(0)$};
	\node[label={[xshift=-0.1cm, yshift=0.1cm]$\Gamma_{R,\mathrm{in}}$}] at ($0.025*(B)+0.025*(C)$) {};
	\node[label={[xshift=0.1cm, yshift=0.1cm]$\Gamma_{R,\mathrm{out}}$}] at ($0.025*(K)+0.025*(J)$) {};
	\node[label={[xshift=-0.4cm, yshift=-0.4cm]$\Gamma_{D}$}] at ($0.025*(C)+0.025*(D)$) {};
	\node[label={[xshift= 0.5cm, yshift=-0.4cm]$\Gamma_{D}$}] at ($0.025*(I)+0.025*(J)$) {};
	
	\node[label={[xshift=-0.8cm]$\widetilde{\Gamma_{D,m}}(0)$}] at ($0.025*(E)+0.025*(F)$) {};
	\node[label={[xshift=0.8cm]$\widetilde{\Gamma_{D,m}}(0)$}] at ($0.025*(G)+0.025*(H)$) {};
		
	\end{tikzpicture}
	\caption{Front view of the initial geometry $\Omega(0)$. Blue boundaries belong to $\Gamma_D$. The dashed line represents the plane $z=-3$. The height from top to bottom is 17. The base of the pipe is located at $z=-10$. The radius of the pipe is 3.}
	\label{fig:ypump_front}
\end{figure}

\begin{figure}[htbp]
	\centering
	\begin{adjustwidth}{4em}{4em}
		\subfloat[][]{\includegraphics[width=0.49\linewidth]{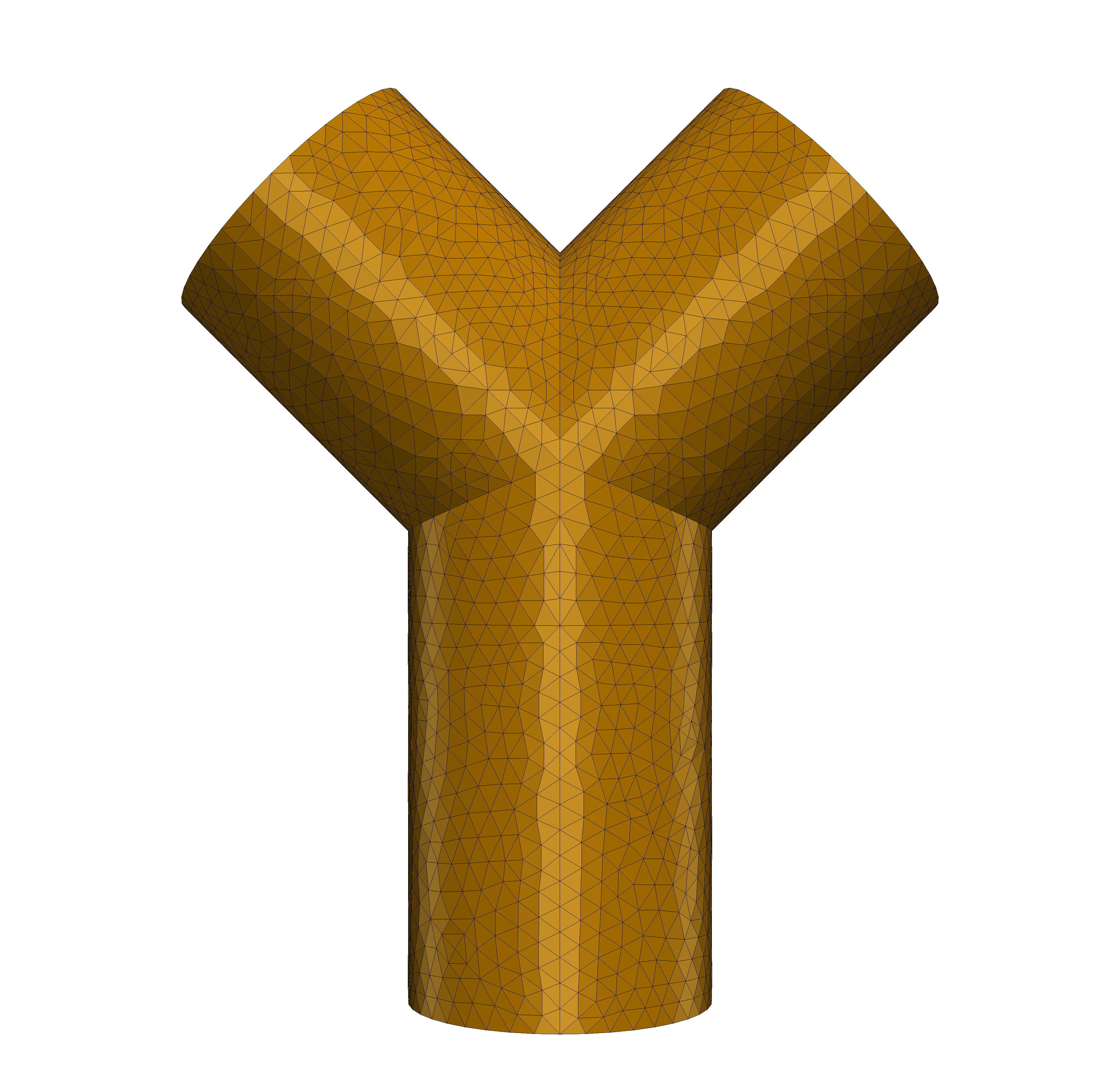}}%
		\subfloat[][]{\includegraphics[width=0.49\linewidth]{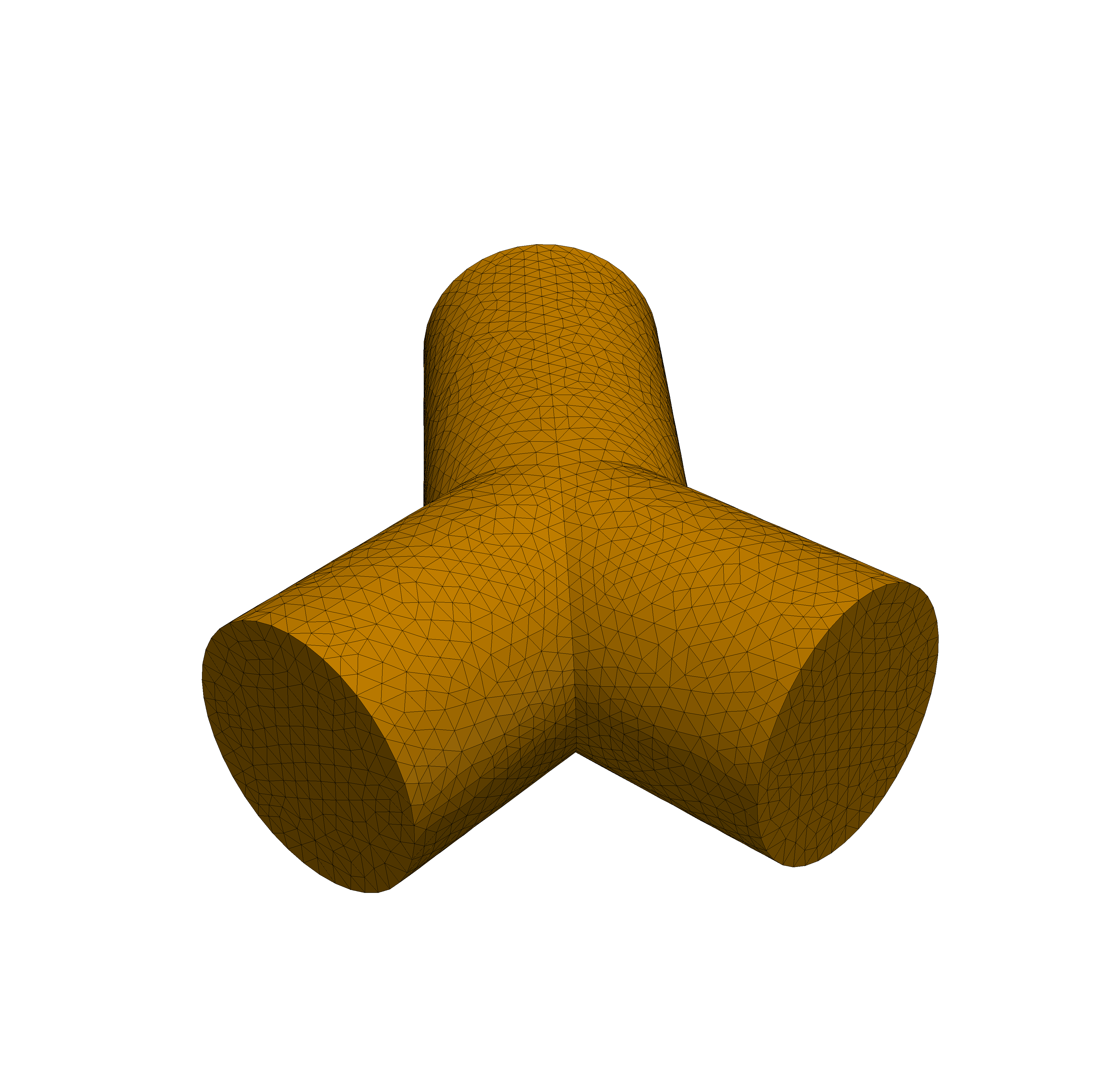}}\\%
		\subfloat[][]{\includegraphics[width=0.49\linewidth]{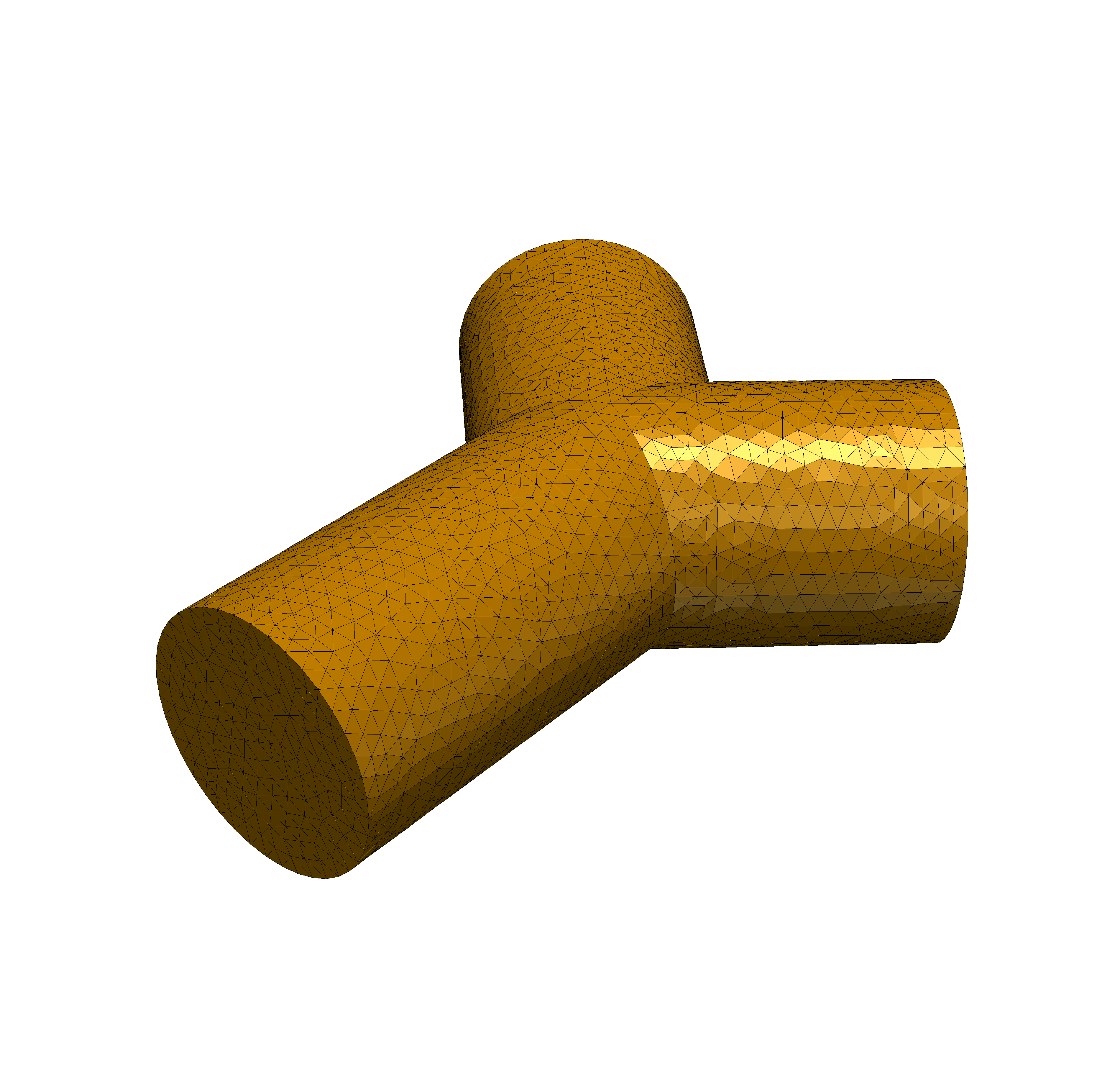}}
		\subfloat[][]{\includegraphics[width=0.49\linewidth]{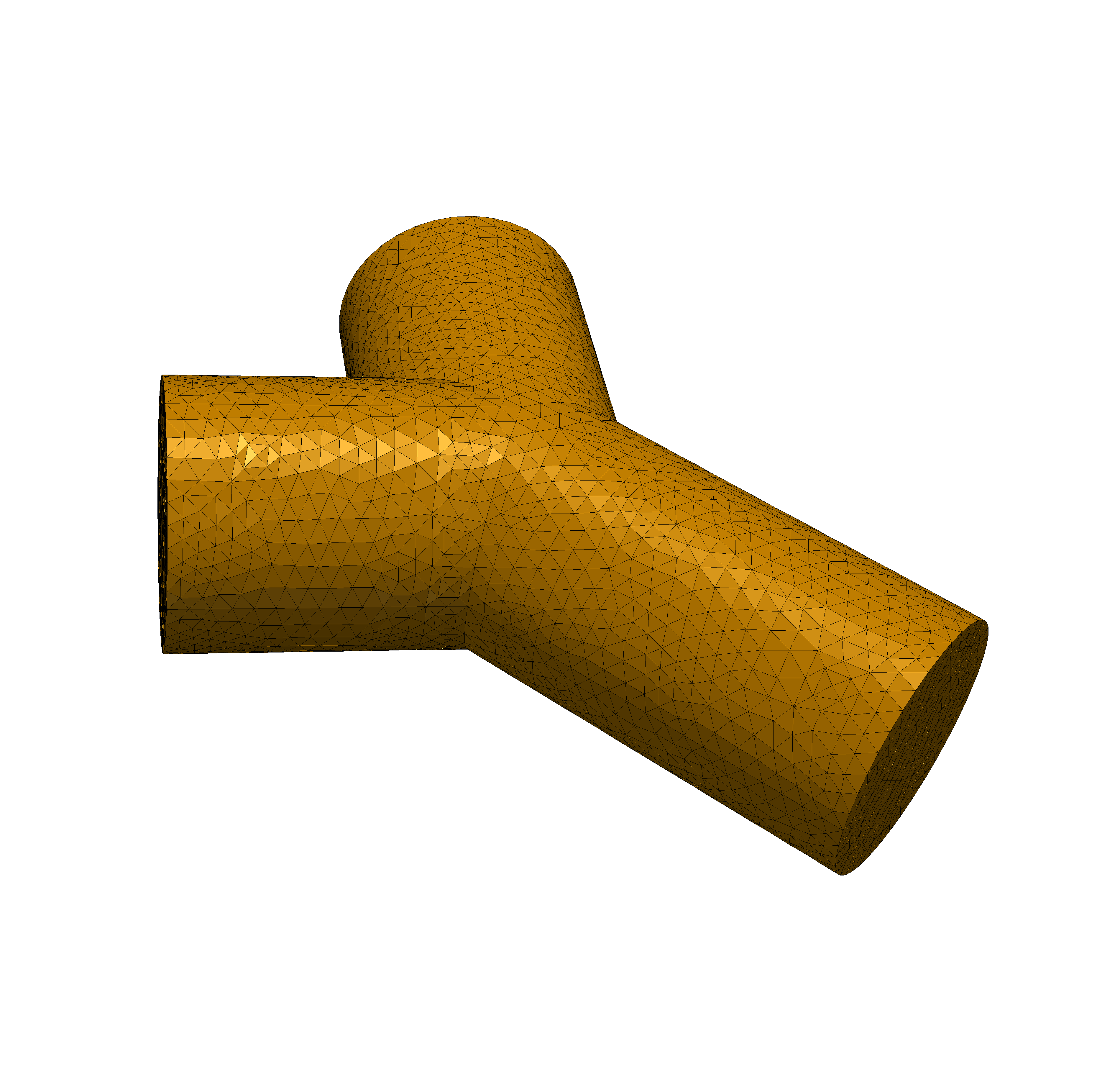}}
	\end{adjustwidth}
	\caption{Initial Triangulation $\Omega(0)$}
	\label{fig:initial_triangulation_ypump}
\end{figure}

\begin{figure}[htbp]
	\centering
	\begin{adjustwidth}{4em}{4em}
		\subfloat[][$\Omega(0.19)$]{\includegraphics[width=0.5\linewidth]{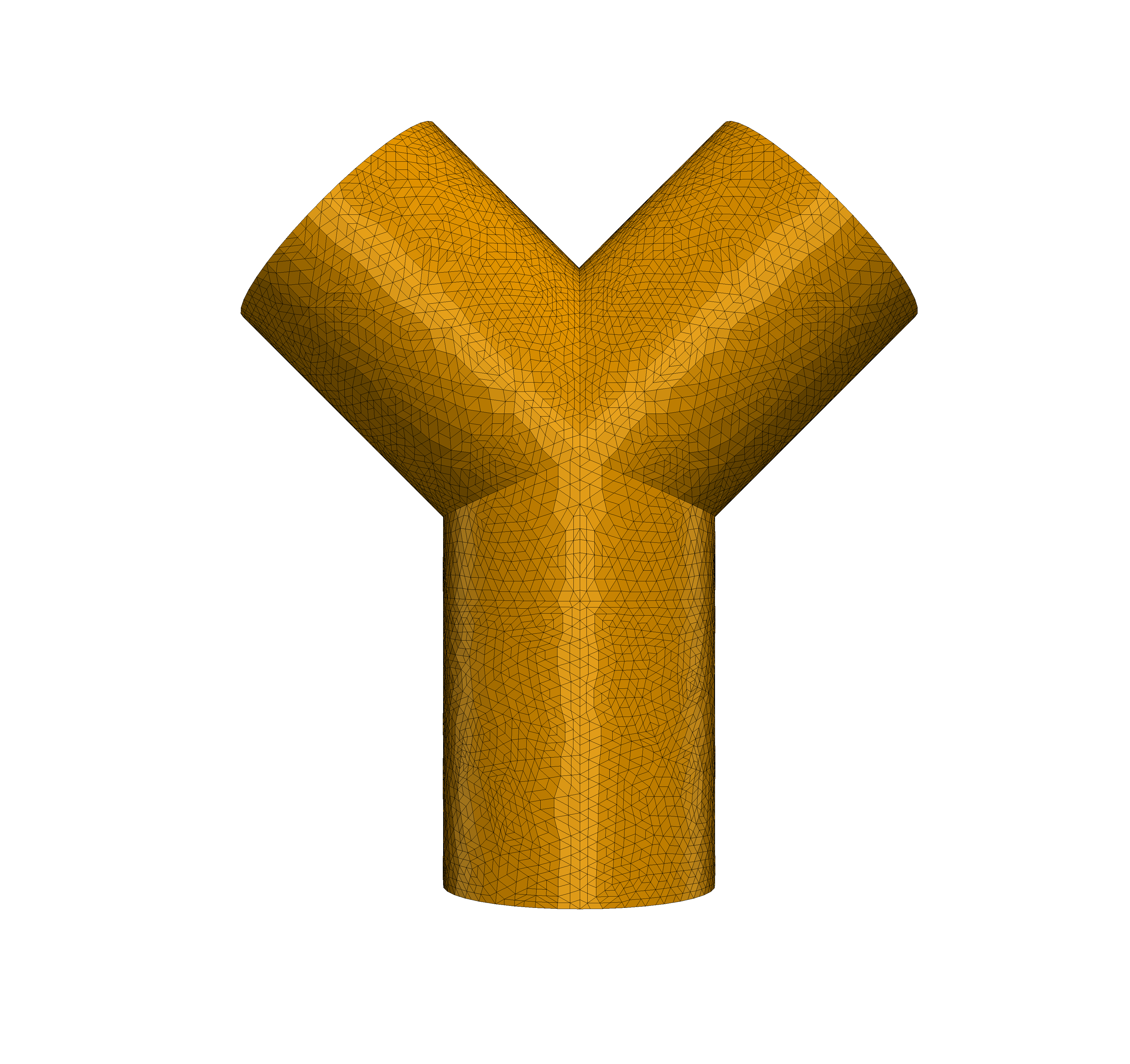}}%
		\subfloat[][$\Omega(0.54)$]{\includegraphics[width=0.5\linewidth]{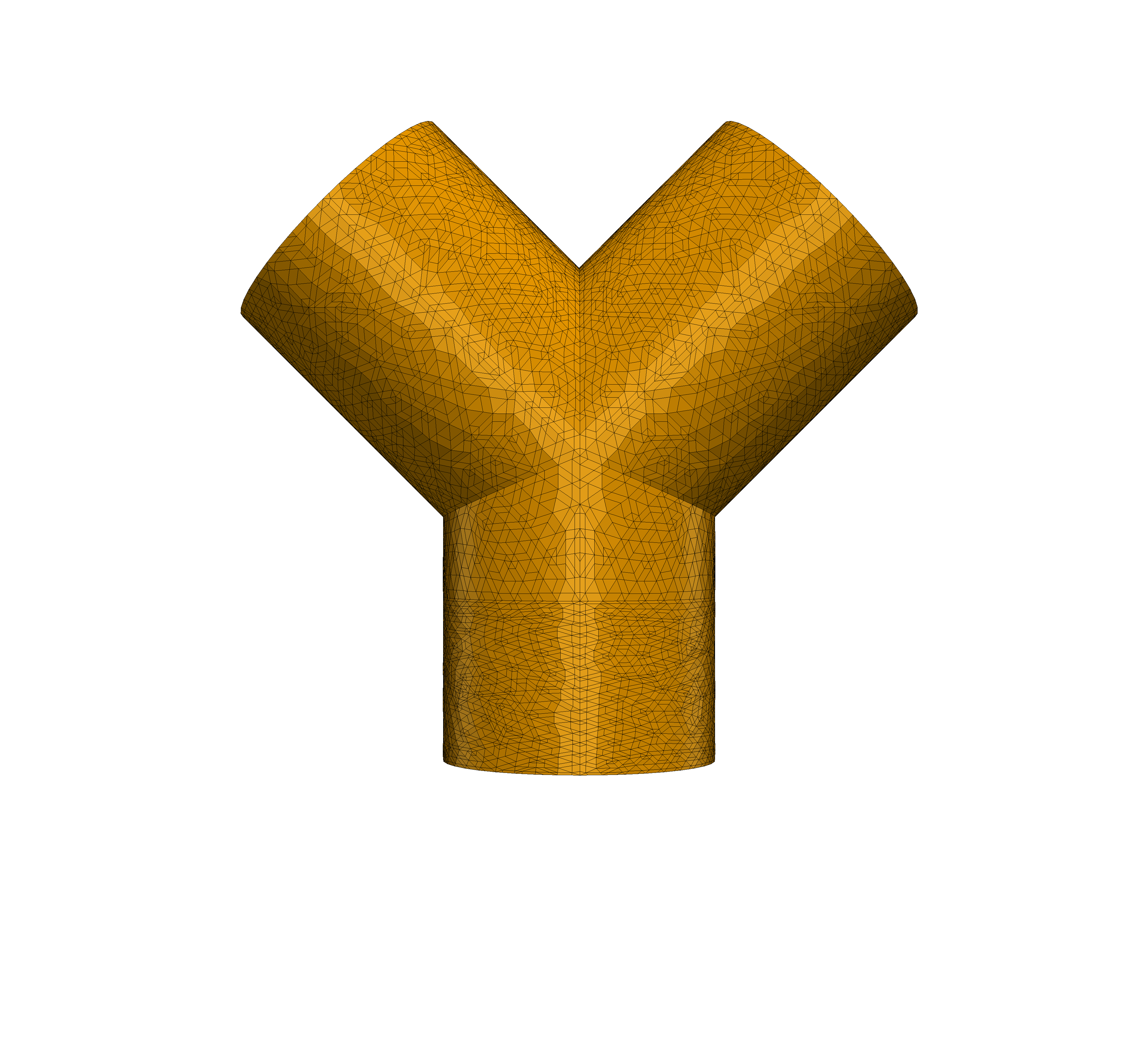}}\\%
		\subfloat[][$\Omega(0.77)$]{\includegraphics[width=0.5\linewidth]{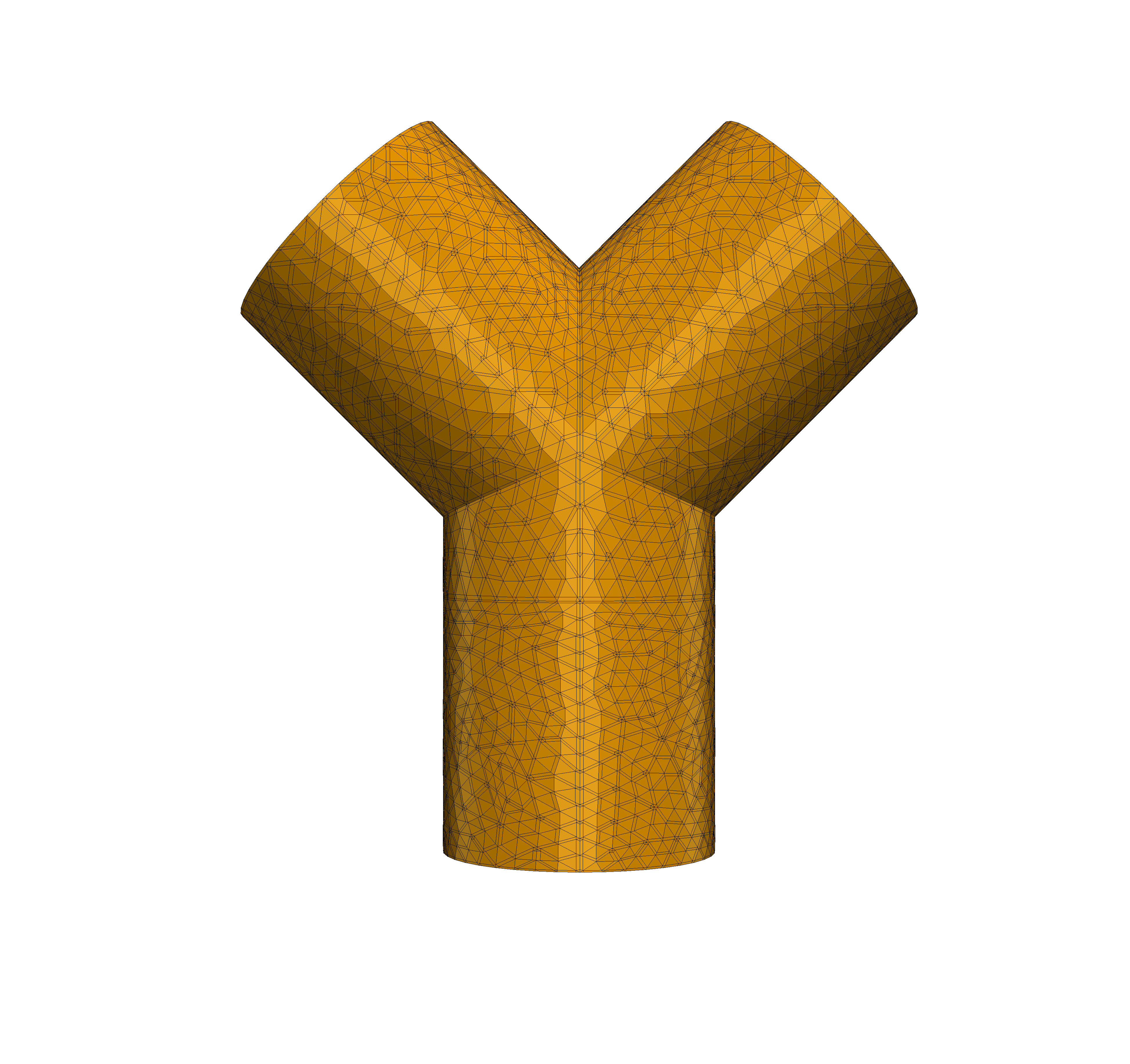}}
		\subfloat[][$\Omega(0.95)$]{\includegraphics[width=0.5\linewidth]{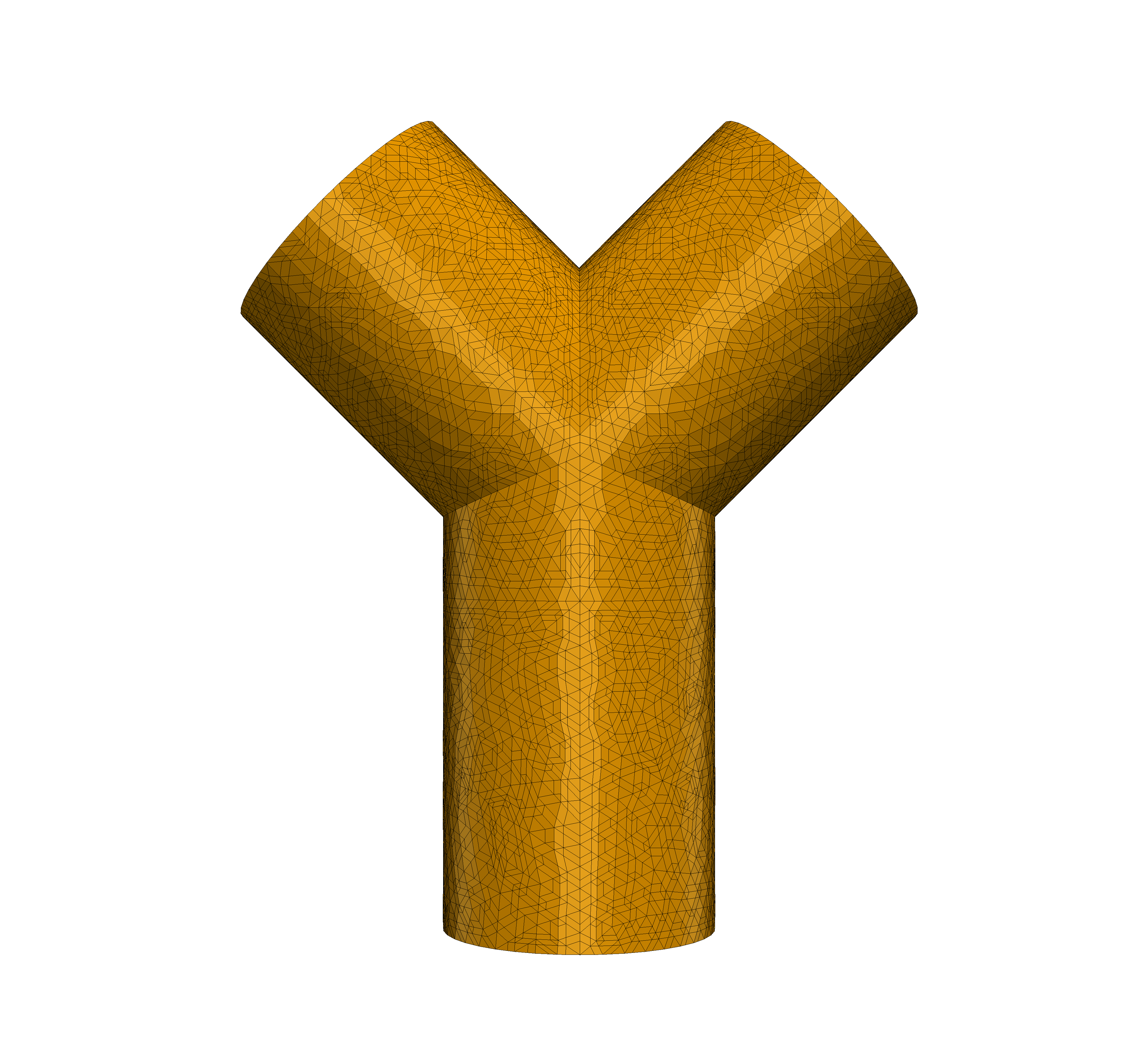}}
	\end{adjustwidth}
	\caption{Snapshots of the Triangulations $\Omega(t)$}
	\label{fig:triangulation_snapshots_ypump}
\end{figure}

\begin{figure*}[htbp]
	\centering
	\begin{adjustwidth}{4em}{4em}
		\subfloat[][$t=0.05$]{\includegraphics[width=0.49\linewidth]{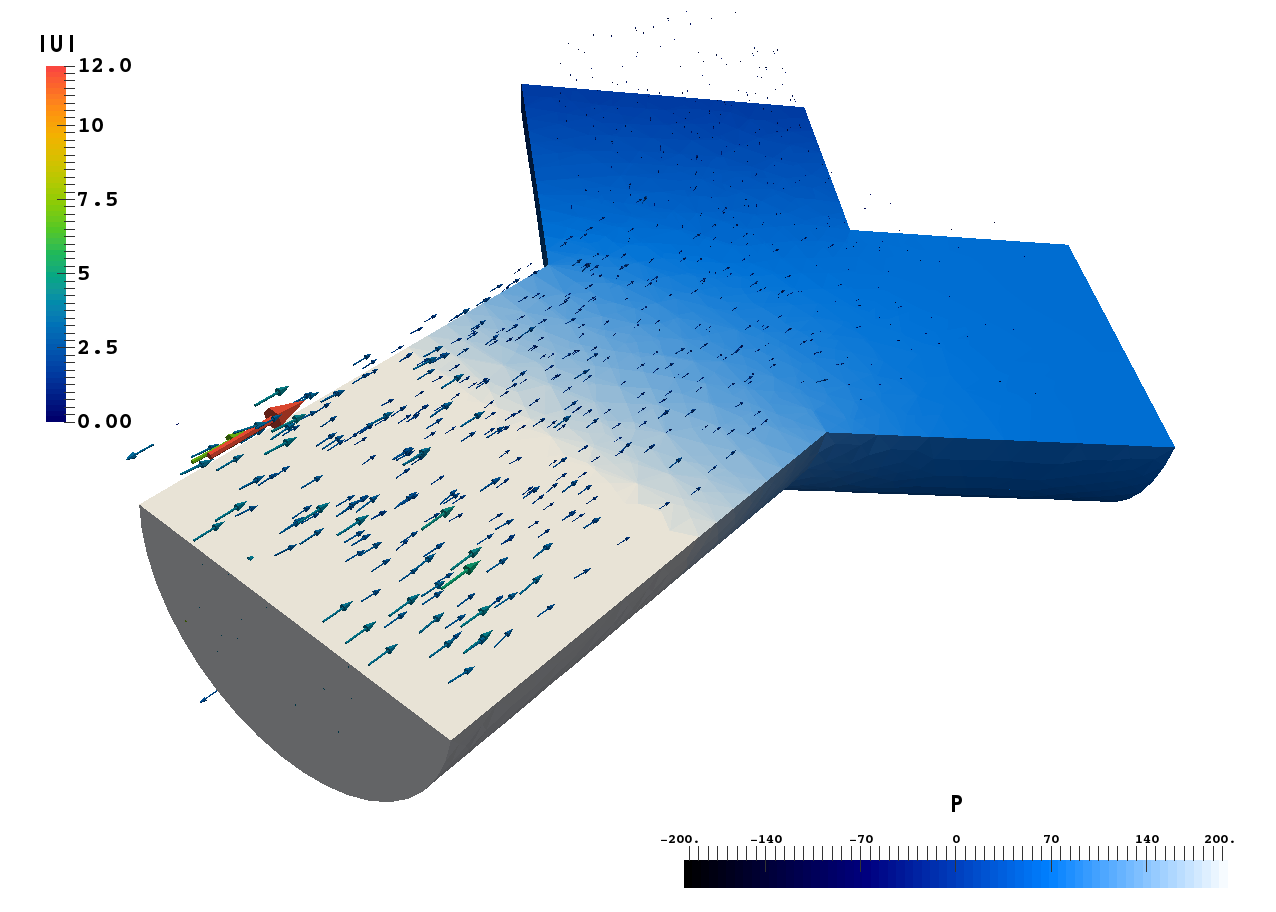}}%
		\subfloat[][$t=0.35$]{\includegraphics[width=0.49\linewidth]{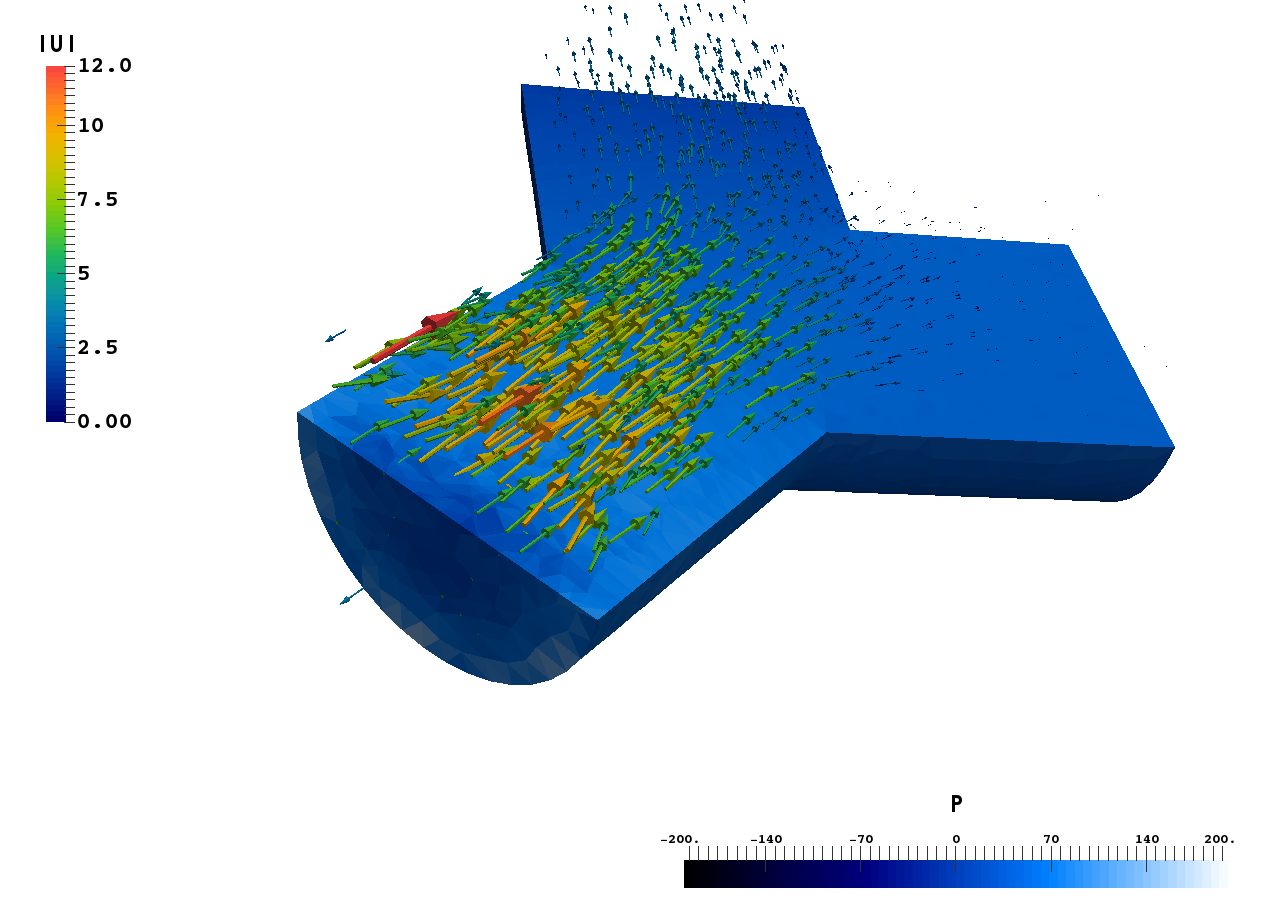}}\\%
		\subfloat[][$t=0.7$]{\includegraphics[width=0.49\linewidth]{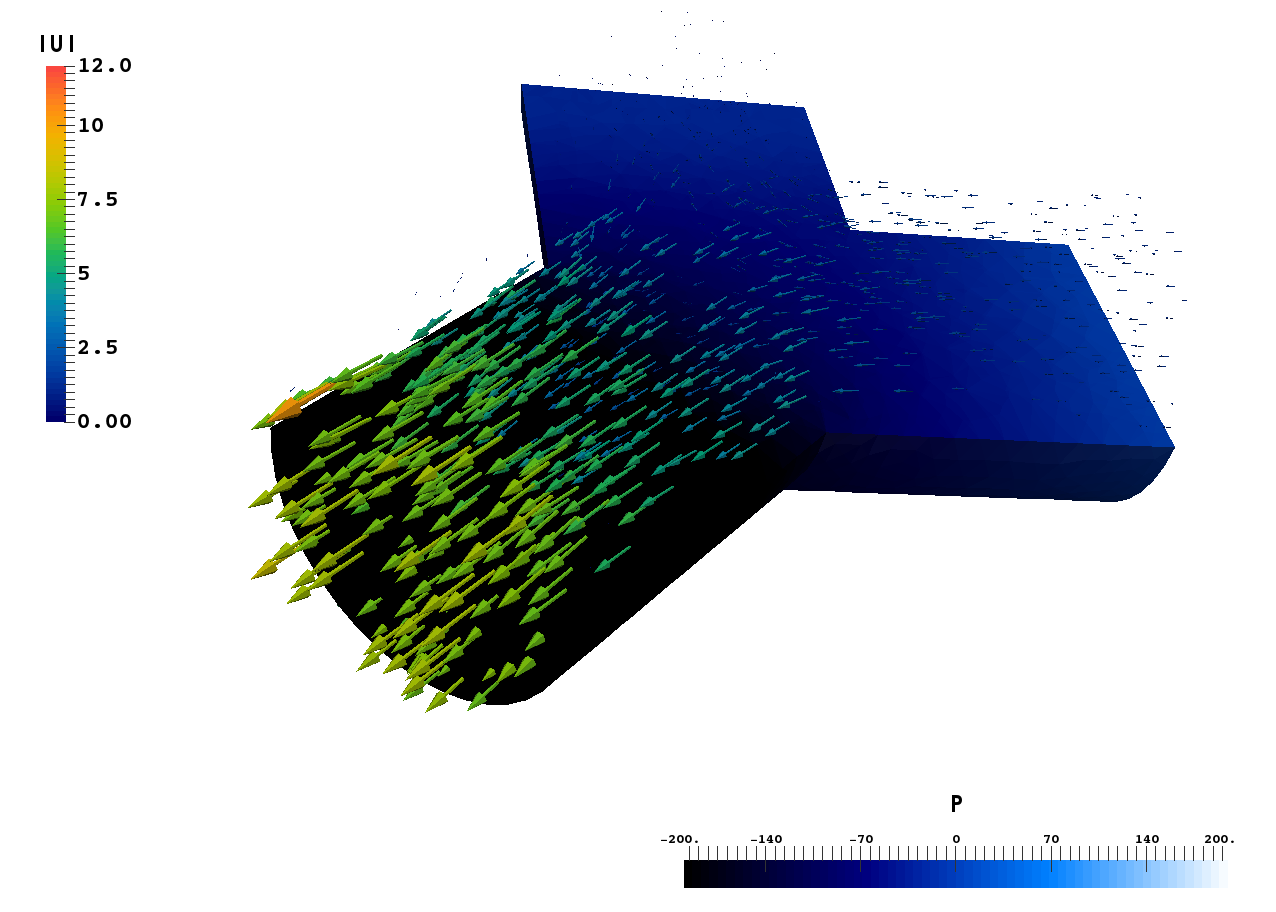}}
		\subfloat[][$t=1.0$]{\includegraphics[width=0.49\linewidth]{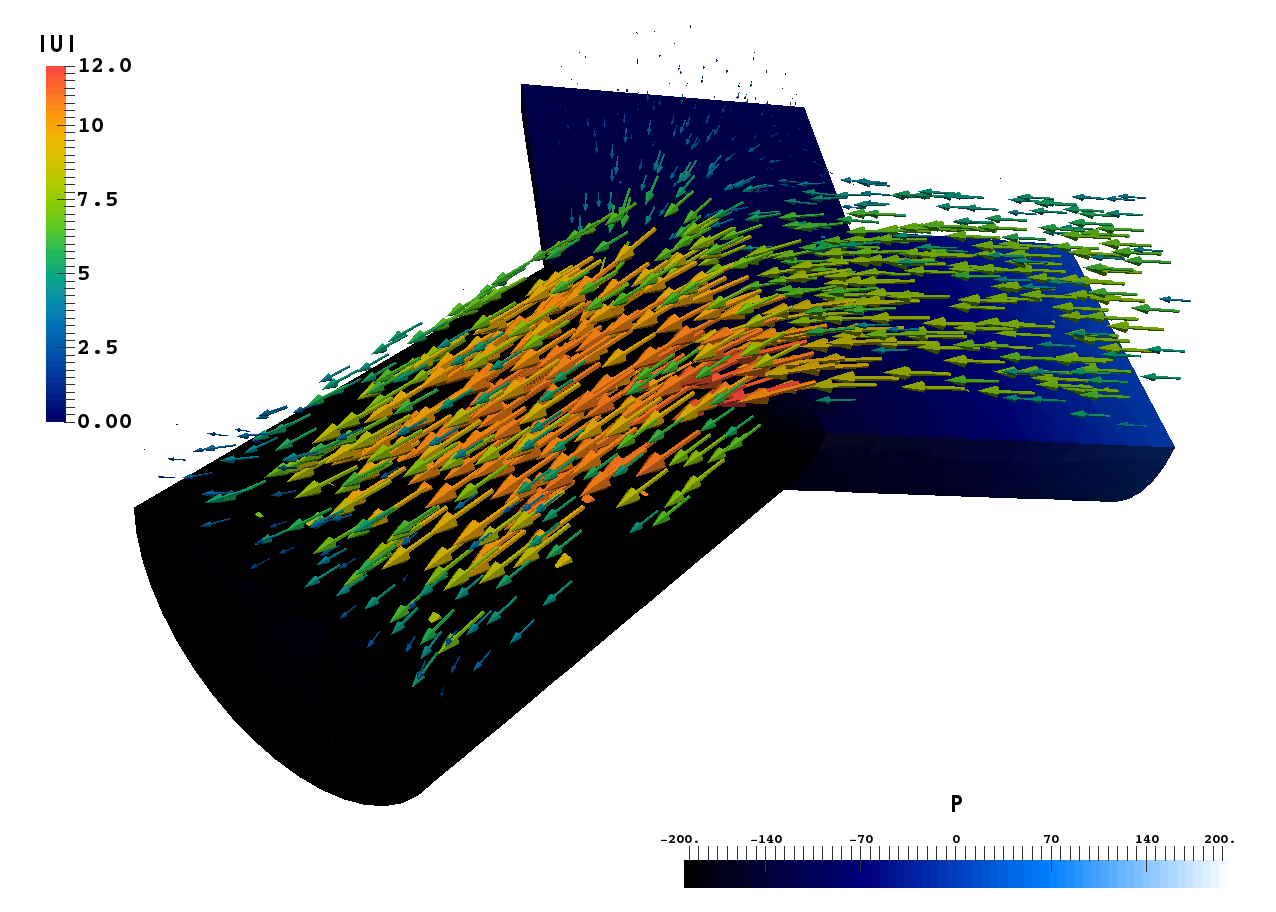}}
	\end{adjustwidth}
	\caption{Snapshots of the solution. Additionally we have cut along the $y$-axis.}
	\label{fig:results:2}
\end{figure*}
\section{Conclusions}

In this paper we have presented a novel approach to construct four-dimensional triangulations for moving domains. This was done by extending the elements of the space triangulation into hyperprisms. Assuming a consistent numbered spatial triangulation we were able to proof that our algorithm produces admissible space-time meshes. We implemented the presented algorithm and applied it to solve the transient Stokes equations with a space-time discontinuous Galerkin finite element method. In the future one could start investigating Navier-Stokes equations. Furthermore, optimal control problems with time dependent partial differential equations render themselves interesting candidates for applying space-time methods, since one has to solve a forward and backward problem which are coupled in space and time. Another attractive aspect of general space-time meshes is the possibility to apply adaptive refinement strategies to resolve local behaviors in space and time. Considering solvers, one could think about domain decomposition approaches or space-time multigrid methods for example, which are a future research topic.

\section*{Acknowledgments}
This research was supported by the grant F3210-N18 from the Austrian Science Fund (FWF).


\bibliographystyle{spmpsci}      
\bibliography{bibliography}   

%
%

\end{document}